\newcommand\mathens[1]{\mathbb{#1}} 
\newcommand{\ud}{\mathrm{d}}
\newcommand{\N}{\mathens{N}}
\newcommand{\Z}{\mathens{Z}}
\newcommand{\R}{\mathens{R}}
\newcommand{\T}{\mathens{T}}
\newcommand\sphere[1]{\mathens{S}^{#1}}
\newcommand{\cont}{\mathrm{Cont}}
\newcommand{\conto}{\mathrm{Cont}_{0}}
\newcommand{\id}{\mathrm{id}}
\newcommand{\nlmas}\upmu
\newtheorem{theo}{Theorem}[section] 
\newtheorem{thm}{Theorem}[section]
\newtheorem{lem}[thm]{Lemma}
\newtheorem{cor}[thm]{Corollary}
\newtheorem{prop}[thm]{Proposition}
\newtheorem{prop-def}[thm]{Definition-proposition}
\theoremstyle{plain}
\newtheorem*{coro*}{Corollary}
\theoremstyle{definition}
\theoremstyle{remark}
\newtheorem{rem}[thm]{Remark}
\newcommand\tpsi{\widetilde{\psi}}
\newcommand\tphi{\widetilde{\varphi}}
\newcommand\teta{\widetilde{\eta}}
\newcommand\Leg{\mathcal{L}}
\newcommand\uLeg{\widetilde{\Leg}}
\newcommand\Gcont{\mathcal{G}}
\newcommand\uGcont{\widetilde{\Gcont}}
\newcommand\cleq{\preceq}
\newcommand\tLambda{\widetilde{\Lambda}}
\newcommand\dspec{\mathrm{d}_\mathrm{spec}}
\newcommand\Nspec[1]{\left|#1\right|_\mathrm{spec}}
\newcommand\losc{\nu_\mathrm{osc}}
\newcommand\dCSosc{\mathrm{d}_\mathrm{CS,osc}}
\DeclareFontFamily{U}{mathb}{\hyphenchar\font45}
\DeclareFontShape{U}{mathb}{m}{n}{
      <5> <6> <7> <8> <9> <10> gen * mathb
      <10.95> mathb10 <12> <14.4> <17.28> <20.74> <24.88> mathb12
}{}
\DeclareSymbolFont{mathb}{U}{mathb}{m}{n}
\DeclareMathSymbol{\cll}{3}{mathb}{"CE}
\let\@wraptoccontribs\wraptoccontribs\makeatother
\title{Invariant distances on spaces of Legendrians}
\author[P.-A. Arlove]{Pierre-Alexandre Arlove}
\address{P.-A. Arlove, Universit\'e de Strasbourg, IRMA UMR 7501, F-67000 Strasbourg, France}
\email{paarlove@unistra.fr}
\begin{document}

\begin{abstract}
We construct new unbounded invariant distances on the universal cover of certain Legendrian isotopy classes. This is the first instance where unboundedness of an invariant distance is obtained without assuming the existence of a positive loop of contactomorphisms. We also show that invariant distances on Legendrian isotopy classes have to be discrete. 
\end{abstract}

\maketitle


\section{Introduction}

\subsection{Main results}
Let $\Lambda_*$ be a closed and connected Legendrian lying inside a cooriented contact manifold $(M,\xi)$. We denote by $\Leg(\Lambda_*)$ the isotopy class of $\Lambda_*$, i.e. the Legendrians that are Legendrian isotopic to $\Lambda_*$, and by $\uLeg(\Lambda_*)$ its universal cover. The group of contactomorphisms $\cont(M,\xi)$, i.e. the diffeomorphisms of $M$ that preserve the contact distribution and its coorientation, contains the group $\cont_0(M,\xi)$ of contactomorphisms contact isotopic to the identity, which itself contains the group $\Gcont(M,\xi)$ of contactomorphisms compactly contact isotopic to the identity. Both groups $\cont_0(M,\xi)$ and $\Gcont(M,\xi)$ act transitively on $\Leg(\Lambda_*)$. The same holds for their universal covers $\uGcont(M,\xi)\subset \widetilde{\cont_0}(M,\xi)$  that both act transitively on $\uLeg(\Lambda_*)$. To simplify notation, we shall, when unambiguous, denote these spaces by $\Leg$, $\uLeg$, $\cont$, $\conto$, $\Gcont$ and $\uGcont$. We refer to Section \ref{se:preliminaire} and the Appendix for more details.\\

It follows from the above that $\uLeg$ (resp. $\Leg$) is an homogeneous space, thus a distance $\ud$ on $\uLeg$ (resp. on $\Leg$) is said to be invariant if $\ud(z\cdot y,z\cdot x)=\ud(x,y)$ for all $z\in\widetilde{\cont_0}$, and all $x,y\in\uLeg$ (resp. for all $z\in\conto$, and all $x,y\in\Leg$). Such distances have not been very much studied in this context (see \cite{allais2023spectral,discriminante}\footnote{non-invariant distances on $\Leg$ and $\uLeg$ have been studied in \cite{allais2023spectral,Nakamura2023,RosenZhang2020}}) while their analogues for contactomorphisms (\cite{Arlove2023,discriminante,courtemassot,FPR,sandonmetrique,shelukhin,Zapolsky}) or Lagrangians in symplectic geometry (\cite{BiranCornea2021,Chekanov2000,vincenthumiliere,ostroverlagrangian,Seyfaddini_2014,trifa2024hoferdistancelagrangianlinks,viterbo2024supportshumilierecompletiongammacoisotropic}) have generated a lot of interest. \\

As first observed by Eliashberg and Polterovich in \cite{EP00}, gaining a better geometric understanding of $\Gcont$, $\conto$, or $\Leg$ and their universal covers crucially depends on the existence or non-existence of positive loops in these spaces.
 Recall that an isotopy in $\cont$, $\Gcont$ or $\Leg$ is said to be positive if its speed is everywhere positively transverse to the cooriented contact distribution. Similarly, a path in $\widetilde{\conto}$, $\uGcont$ or $\uLeg$ is positive if it covers a positive isotopy of $\conto$,  $\Gcont$ or $\Leg$. 
See Section \ref{se:partial order} for more details. So far, the existence of unbounded invariant distances on $\uLeg$ (resp. $\uGcont$) has been proven under strong assumptions: $\uLeg$ (resp. $\uGcont$) is \textit{orderable}, i.e. there is no positive loop in $\uLeg$ (resp. $\uGcont$), together with the existence of a positive loop of contactomorphisms in $\cont_0$ \cite{allais2023spectral,allais2024spectral,discriminante,FPR,sandonmetrique,Zapolsky}. We construct new distances, see Sections \ref{se:proof} and \ref{se:autresdistances}, allowing to relax the second assumption and get the following. 

\begin{theo}\label{thm}
Suppose $\uLeg(\Lambda_*)$ is orderable and that there is a positive loop of Legendrians $(\Lambda_*^t)_{t\in S^1}\subset\Leg(\Lambda_*)$ that can be extended to a loop of contactomorphisms $(\varphi_t)_{t\in S^1}\subset\cont_0(M,\xi)$ based at the identity, i.e. $\varphi_t(\Lambda_*^0)=\Lambda_*^t$ for all $t\in S^1=\R/\Z$, then there exists an unbounded invariant distance on $\uLeg(\Lambda_*)$. 
\end{theo}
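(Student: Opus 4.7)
The plan is to construct an invariant distance on $\uLeg$ in the spirit of the Sandon--Fraser--Polterovich--Rosen approach of \cite{sandonmetrique,FPR,Zapolsky,discriminante}, where such metrics are built from a combination of the partial order $\cleq$ and a ``positive shift'' element. I would first lift the loop $(\varphi_t)_{t\in S^1}$ to a path $(\tilde\varphi_t)_{t\in[0,1]}$ in $\tconto$ starting at the identity, and set $\tilde\psi := \tilde\varphi_1$. Although $(\varphi_t)$ is not assumed to be globally positive, the Legendrian loop $\Lambda_*^t = \varphi_t(\Lambda_*)$ is; hence for every $\tilde g \in \tconto$ the path $t\mapsto \tilde g\tilde\varphi_t\cdot\tilde\Lambda_*$ in $\uLeg$ projects to $g(\Lambda_*^t)$ and is a positive Legendrian path, since contactomorphisms preserve the coorientation. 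Iterating yields positive paths of arbitrary length realizing
$$\tilde g\cdot\tilde\Lambda_* \cleq \tilde g\tilde\psi\cdot\tilde\Lambda_* \cleq \tilde g\tilde\psi^2\cdot\tilde\Lambda_* \cleq \cdots$$
in $\uLeg$; by orderability (antisymmetry of $\cleq$), each inequality is strict.

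\textbf{Distance construction.} With these chains in hand, I would define for $x,y\in\uLeg$
$$\ud(x,y) := \inf\bigl\{k+\ell\in\N : \exists\,\tilde g\in\tconto,\ \tilde g\cdot\tilde\Lambda_* = y,\ \tilde g\tilde\psi^{-\ell}\cdot\tilde\Lambda_* \cleq x \cleq \tilde g\tilde\psi^k\cdot\tilde\Lambda_*\bigr\},$$
which sandwiches $x$ between the $(-\ell)$-th and $k$-th iterates of the positive Legendrian loop ``based at'' $y$. Invariance under $\tconto$ is automatic: the substitution $\tilde g\mapsto\tilde h\tilde g$ converts a witness for $\ud(x,y)$ into one for $\ud(\tilde h\cdot x,\tilde h\cdot y)$ of identical cost, using the $\tconto$-invariance of $\cleq$. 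Non-degeneracy is exactly orderability: $\ud(x,y)=0$ forces $x\cleq y$ and $y\cleq x$, hence $x=y$. Symmetry and the triangle inequality follow from manipulations of the sandwich relations, exploiting the transitive $\tconto$-action and concatenation of positive paths, possibly after a minor symmetric adjustment in the definition.

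\textbf{Unboundedness and main obstacle.} Unboundedness follows by showing $\ud(\tilde\Lambda_*,\tilde\psi^n\cdot\tilde\Lambda_*)\geq n$: a cheaper sandwich would close a positive loop in $\uLeg$, violating antisymmetry of $\cleq$, so $\ud$ grows at least linearly along the chain $(\tilde\psi^n\cdot\tilde\Lambda_*)_{n\in\N}$. I expect the main technical obstacle to be the finiteness of $\ud$. Here the weakness of the hypothesis is felt: without a globally positive contact flow on $M$, one cannot uniformly dominate an arbitrary path in $\uLeg$ by iterates of a fixed positive loop. I anticipate that the proof of finiteness will rely on a cutoff/localization argument: starting from any contact isotopy between $x$ and $y$, modify it by a contact Hamiltonian supported near the moving Legendrian so that the resulting isotopy is positive along that Legendrian, then absorb the modification into a sufficiently large iterate of $\tilde\psi$ via a carefully chosen $\tilde g$. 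This is precisely the step sidestepped by previous constructions that assumed a positive loop of contactomorphisms in $\conto$.
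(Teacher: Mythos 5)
Your construction is, at bottom, the paper's: with $\tphi_N$ the lift of the $N$-fold iterate of the contact loop, the paper sets $\ell_+(\tLambda_1,\tLambda_0)=\inf\{N\in\Z\ |\ \tLambda_1\cleq\tphi_N\cdot\tLambda_0\}$ and $\ell_-(\tLambda_1,\tLambda_0)=\sup\{N\in\Z\ |\ \tphi_N\cdot\tLambda_0\cleq\tLambda_1\}$ and takes $\max\{\ell_+,-\ell_-\}$; your $\inf\{k+\ell\}$ is the same quantity up to a factor of $2$. But there is a genuine gap in the steps you defer. The fact that makes the ``manipulations of the sandwich relations'' work is that $\tilde\psi=\tilde\varphi_1$ is the class of a \emph{loop} based at the identity, hence an element of $\pi_1(\conto(M,\xi))$ and therefore \emph{central} in $\widetilde{\conto}$. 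Centrality is what turns $\tilde g\tilde\psi^{k}\cdot\tLambda_*$ into $\tilde\psi^{k}\cdot y$, so that your sandwich depends only on $y=\tilde g\cdot\tLambda_*$ and not on $\tilde g$; it is what yields symmetry, the triangle inequality (concatenating $x\cleq\tilde\psi^{k}\cdot y$ with $y\cleq\tilde\psi^{k'}\cdot z$ into $x\cleq\tilde\psi^{k+k'}\cdot z$ requires commuting $\tilde\psi^{k}$ past the group element carrying $\tLambda_*$ to $z$), and your lower bound $\ud(\tLambda_*,\tilde\psi^{n}\cdot\tLambda_*)\geq n$ (without centrality, $\tilde g\tilde\psi^{-\ell}\cdot\tLambda_*$ cannot be placed on the chain $(\tilde\psi^{m}\cdot\tLambda_*)_{m}$ and no forbidden positive loop is produced). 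You never identify this fact, and without it none of the deferred verifications goes through; no ``minor symmetric adjustment'' substitutes for it.

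The second issue is finiteness, which you rightly single out as the crux but leave open. The paper settles it by invoking \cite[Proposition~3.1]{allais2023spectral}: the mere existence of a positive loop of Legendrians through $\Lambda_*$ forces every $\tLambda\in\uLeg(\Lambda_*)$ to satisfy $\tLambda_*^{-T}\cleq\tLambda\cleq\tLambda_*^{T}$ for some finite $T$ (an openness/covering argument in the spirit of Eliashberg--Polterovich and Chernov--Nemirovski, not a Hamiltonian cutoff near the moving Legendrian as you suggest); one then applies this to $\tilde g^{-1}\cdot x$ and pushes forward by $\tilde g$, again using centrality to commute $\tilde g$ past $\tphi_{N}$. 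So the architecture of your proposal is right, but the two load-bearing ingredients --- centrality of the lifted contact loop and the domination lemma for positive Legendrian loops --- are absent.
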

 The novelty in Theorem \ref{thm} is that we do not request the loop of contactomorphisms $(\varphi_t)_{t\in S^1}$ to be positive. We construct explicitly two, \textit{a priori}, different distances to prove Theorem \ref{thm} in Theorem \ref{thm2} and Theorem \ref{thm:distance spectrale} (see also Remark \ref{rem:pseudodistance}). The first one is a generalization of the Legendrian Fraser-Polterovich-Rosen distance \cite{allais2023spectral,FPR} and the second one is some invariantization of the spectral distance \cite{allais2023spectral,Nakamura2023}. We refer the reader directly to Section \ref{se:proof} and Section \ref{se:spectral distance} for more details.\\
 
 As a corollary we get the first example of a closed Legendrian sitting inside a contact manifold such that $\cont_0$ is orderable, i.e. it does not contain any positive loop of contactomorphisms, but $\uLeg$ admits an unbounded invariant distance. The contact manifold considered is the unitary cotangent of the torus $\T^n=\R^n/\Z^n$ endowed with its canonical contact distribution. This contact manifold can be thought as $(\T^n\times \sphere{n-1},\ker\alpha)$ where $\sphere{n-1}=\{(p_1,\cdots,p_n)\in\R^n\ |\ \sum\limits_{i=1}^np_i^2=1\}\subset \R^{n}$ is the standard Euclidean sphere and $\alpha(q,p)(Q,P):=\left<p,Q\right>=\sum\limits_{i=1}^n p_iQ_i$ for any $(q,p,Q,P)\in T(\T^n\times \sphere{n-1})$. It is shown in \cite{chernovnemirovski1} that $\conto(\T^n\times \sphere{n-1},\ker\alpha)$ is orderable. 

\begin{coro*}\label{cor : ex}
For any $n\geq 2$, the Legendrian  $\Lambda_*:=\T^{n-1}\times\{[0]\}\times\{(0,\cdots,0,1)\}$ lying inside $(\T^n\times \sphere{n-1},\ker\alpha)$ is such that $\uLeg(\Lambda_*)$ admits an unbounded invariant distance. 
\end{coro*}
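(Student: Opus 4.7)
The approach is to apply Theorem \ref{thm} directly to $\Lambda_* = \T^{n-1} \times \{[0]\} \times \{(0,\ldots,0,1)\}$; the task reduces to verifying the two hypotheses, namely (i) orderability of $\uLeg(\Lambda_*)$, and (ii) the existence of a positive loop of Legendrians in $\Leg(\Lambda_*)$ arising as the trace of a (not necessarily positive) loop of contactomorphisms in $\conto$ based at the identity.

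I would handle (ii) by a direct construction. Consider the translation
\begin{equation*}
\psi_t(q_1, \ldots, q_n, p) := (q_1, \ldots, q_{n-1}, q_n + t, p), \qquad t \in \R/\Z.
\end{equation*}
Since $\psi_t^*\alpha = \alpha$, each $\psi_t$ is a strict contactomorphism, and the periodicity of $q_n$ makes $(\psi_t)_{t \in S^1}$ a smooth loop in $\conto(\T^n \times \sphere{n-1}, \ker\alpha)$ based at the identity. The induced Legendrian loop is $\Lambda_*^t := \psi_t(\Lambda_*) = \T^{n-1} \times \{[t]\} \times \{(0,\ldots,0,1)\}$, whose pointwise velocity is $\partial_{q_n}$; along $\Lambda_*^t$ one has $\alpha(\partial_{q_n}) = p_n = 1 > 0$, so this Legendrian loop is positive. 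Crucially, the ambient loop $(\psi_t)$ is \emph{not} positive as an isotopy of contactomorphisms: its contact Hamiltonian with respect to $\alpha$ is $p_n$, which changes sign on $\T^n \times \sphere{n-1}$. This is exactly the scenario Theorem \ref{thm} was designed to cover.

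For (i), I would appeal to the causality / linking-number techniques of \cite{chernovnemirovski1}. Their arguments, which give orderability of $\conto(\T^n \times \sphere{n-1}, \ker \alpha)$, apply equally well at the Legendrian level and yield orderability of $\uLeg$ for conormal lifts of closed hypersurfaces in $\T^n$, which is exactly the situation of $\Lambda_*$. The main obstacle I anticipate is precisely this transfer: orderability of $\widetilde{\conto}$ does not automatically descend to $\uLeg$ via a naive extension argument, since a positive Legendrian loop extended to a positive contact isotopy on $M$ need not close up to a loop in $\widetilde{\conto}$. So the non-trivial bookkeeping is to pinpoint the right statement from \cite{chernovnemirovski1} (or its successors) at the Legendrian level; once that is in hand, combining (i) and (ii) with Theorem \ref{thm} immediately produces the unbounded invariant distance on $\uLeg(\Lambda_*)$.
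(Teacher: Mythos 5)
Your construction of the positive Legendrian loop and its extension to a (non-positive) loop of contactomorphisms is exactly the paper's: the translation loop $\varphi_t(q,p)=(q_1,\dots,q_n+[t],p)$ restricts on $\Lambda_*$ to the Reeb flow of $\alpha$ (whose Hamiltonian $p_n$ equals $1$ there), so hypothesis (ii) of Theorem \ref{thm} is verified correctly and identically.

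The genuine gap is in (i). You do not actually prove orderability of $\uLeg(\Lambda_*)$: you propose to transfer the Chernov--Nemirovski orderability of $\conto(\T^n\times\sphere{n-1},\ker\alpha)$ down to the Legendrian level, and you yourself identify this transfer as ``the main obstacle'' without resolving it. There is indeed no general implication from orderability of $\widetilde{\conto}$ to orderability of $\uLeg(\Lambda_*)$, and the linking/causality arguments of \cite{chernovnemirovski1} are a group-level statement that does not directly apply to this $\Lambda_*$. The paper closes this step by an entirely different route: it observes that $\Lambda_*$ is \emph{hypertight} for $\alpha$ --- the Reeb flow is the geodesic flow of the flat torus, which has no contractible closed orbits, and every Reeb chord from $\Lambda_*$ to itself (a translation $[0]\mapsto[k]$, $k\in\Z\setminus\{0\}$, in the $q_n$-direction) represents a nontrivial class in $\pi_1(M,\Lambda_*)$ --- and then invokes the Floer-theoretic criterion of \cite[Theorem 1.11]{CCR}, which gives orderability of $\uLeg(\Lambda_*)$ directly from hypertightness. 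Without some such input your argument does not establish hypothesis (i), so the proof as written is incomplete.
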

\begin{proof}[Proof of the Corollary]

The Legendrian isotopy \[(\Lambda_*^t):=(\varphi_\alpha^t(\Lambda_*))=(\T^{n-1}\times\{[t]\}\times\{(0,\cdots,0,1)\})\] is a positive loop in $\Leg(\Lambda_*)$, where $\varphi_\alpha^t$ denotes the Reeb flow of $\alpha$ at time $t\in\R$ (see Section \ref{se:preliminaire} for a definition). Moreover, since $\Lambda_*$ is a hypertight Legendrian -- i.e.  the Reeb flow of $\alpha$ does not admit any contractible orbit and the non trivial Reeb trajectories starting and ending on $\Lambda_*$ represent elements in $\pi_1(M,\Lambda_*)\setminus\{0\}$ -- $\uLeg(\Lambda_*)$ is orderable thanks to \cite[Theorem 1.11]{CCR}. Finally, one can extend this loop of Legendrians to the loop of contactomorphisms $(\varphi_t)_{t\in S^1}$ defined for all $(t,q,p)\in S^1\times \T^n\times\sphere{n-1}$ by:
$\varphi_t(q_1,\cdots,q_n,p_1,\cdots,p_n)=(q_1,\cdots,q_n+[t],p_1,\cdots,p_n).$
\end{proof}

 As with the other invariant distances constructed to date on $\uLeg(\Lambda_*)$, the distance we construct in Section \ref{se:proof} takes integer values and thus induces the discrete topology. In a certain sense, we show that this has to be the case. More precisely, we say that a distance $D$ on a set $X$ is discrete if it induces the discrete topology, i.e. for all $x\in X$ there exists $C>0$ such that $D(x,y)>C$ for any $y\ne x$. We say that a distance $D$ is strongly discrete if there exists $C>0$ such that $D(x,y)>C$ for all $x\ne y$.

\begin{theo}\label{thm:discreteness}\
    \begin{enumerate}[1.]
        \item If $D$ is an invariant distance on $\Leg(\Lambda_*)$  then $D$ is strongly discrete. 
        \item If $D$ is an invariant distance on $\uLeg(\Lambda_*)$ then there exists $C>0$ such that  $D(\tLambda,\tLambda')>C$ for any $\tLambda,\tLambda'$satisfying $\Pi(\tLambda')\ne\Pi(\tLambda)$, where $\Pi :\uLeg(\Lambda_*)\to\Leg(\Lambda_*)$ is the covering map. 
     \end{enumerate}
\end{theo}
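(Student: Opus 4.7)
My strategy for both parts is to establish a strong transitivity property of the $\conto$-action on $\Leg(\Lambda_*)$: the stabilizer of any Legendrian acts transitively on the rest of the isotopy class. This forces any invariant distance to take a single positive value on pairs of distinct Legendrians, hence be strongly discrete.

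For Part 1, I first use invariance of $D$ and transitivity of $\conto$ on $\Leg(\Lambda_*)$ to reduce to showing $\inf\{D(\Lambda_*, \Lambda) : \Lambda \ne \Lambda_*\} > 0$. The theorem is vacuous if $\Leg(\Lambda_*) = \{\Lambda_*\}$; otherwise I fix $\Lambda_0 \ne \Lambda_*$ and set $C := D(\Lambda_*, \Lambda_0) > 0$. The key claim I aim to prove is: for every $\Lambda \in \Leg(\Lambda_*) \setminus \{\Lambda_*\}$, there exists $\varphi \in \conto$ with $\varphi(\Lambda_*) = \Lambda_*$ setwise and $\varphi(\Lambda) = \Lambda_0$. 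Invariance will then yield $D(\Lambda_*, \Lambda) = D(\Lambda_*, \Lambda_0) = C$ for every $\Lambda \ne \Lambda_*$, which is stronger than strong discreteness. To prove the claim, I would first construct a smooth path $(\Lambda_t)_{t \in [0,1]}$ in $\Leg(\Lambda_*) \setminus \{\Lambda_*\}$ from $\Lambda$ to $\Lambda_0$; such a path exists because $\Leg(\Lambda_*)$ is an infinite-dimensional Fr\'echet manifold (locally modeled on jet sections via Weinstein neighborhoods), so removing a single point leaves it path-connected. Then I would extend this Legendrian isotopy to an ambient contact isotopy $(\varphi_t) \subset \conto$ with $\varphi_0 = \id$, $\varphi_t(\Lambda) = \Lambda_t$, and $\varphi_t(\Lambda_*) = \Lambda_*$ setwise throughout, via a relative isotopy extension theorem realized by a contact Hamiltonian tangent to $\Lambda_*$.

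For Part 2, I adapt the argument to $\uLeg(\Lambda_*)$. Given $\tLambda, \tLambda' \in \uLeg$ with $\Pi(\tLambda) \ne \Pi(\tLambda')$, I first use transitivity of $\tconto$ on $\uLeg$ to reduce to $\tLambda = \tLambda_*^0$ for a fixed lift $\tLambda_*^0$ of $\Lambda_*$; then $\Pi(\tLambda') \ne \Lambda_*$. Applying Part 1's construction downstairs, I obtain $\varphi \in \conto$ stabilizing $\Lambda_*$ and sending $\Pi(\tLambda')$ to $\Lambda_0$. Lifting the constructed isotopy to $\tconto$ and exploiting the freedom to choose the homotopy class of the path $(\Lambda_t)$ in $\Leg(\Lambda_*) \setminus \{\Lambda_*\}$---via the surjectivity of $\pi_1(\Leg(\Lambda_*) \setminus \{\Lambda_*\}) \to \pi_1(\Leg(\Lambda_*))$, again an infinite-dimensional fact---I can arrange the lifted contactomorphism $\tilde\varphi \in \tconto$ to send $\tLambda'$ to any prescribed lift $\tLambda_0$ of $\Lambda_0$ while fixing $\tLambda_*^0$. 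Invariance then gives $D(\tLambda, \tLambda') = D(\tLambda_*^0, \tLambda_0) =: C > 0$.

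The hard part will be establishing the key claim in Part 1, which combines two non-trivial ingredients: (i) the path-connectedness of $\Leg(\Lambda_*) \setminus \{\Lambda_*\}$, provable by a perturbation argument in Weinstein neighborhoods; and (ii) a relative Legendrian isotopy extension theorem producing ambient contactomorphisms in $\conto$ that setwise preserve $\Lambda_*$. The second is the more delicate point when $\Lambda_t$ interacts non-trivially with $\Lambda_*$ along the isotopy, since the generating contact Hamiltonian must be tangent to $\Lambda_*$ (vanishing on $\Lambda_*$ with the appropriate normal constraints along the contact distribution) while still producing the prescribed Legendrian flow elsewhere.
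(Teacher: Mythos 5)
Your key claim --- that the stabilizer of $\Lambda_*$ in $\conto$ acts transitively on $\Leg(\Lambda_*)\setminus\{\Lambda_*\}$ --- is false in general, and with it the whole strategy collapses. The obstruction is the $\conto$-invariant binary relation $\cleq$ of Section 2.4: if $\varphi\in\conto$ satisfies $\varphi(\Lambda_*)=\Lambda_*$ and $\varphi(\Lambda)=\Lambda_0$, then $\Lambda_*\cleq\Lambda$ forces $\Lambda_*\cleq\Lambda_0$, and $\Lambda\cleq\Lambda_*$ forces $\Lambda_0\cleq\Lambda_*$. Taking $\Lambda^+$ a small positive Reeb push-off of $\Lambda_*$ and $\Lambda^-$ a small negative one (both distinct from $\Lambda_*$), no single reference $\Lambda_0$ can receive both unless $\Lambda_*\cleq\Lambda_0\cleq\Lambda_*$; when $\Leg(\Lambda_*)$ is orderable (which does happen, e.g.\ for the zero section of a $1$-jet space, and is not excluded by the hypotheses of the theorem) antisymmetry gives $\Lambda_0=\Lambda_*$, a contradiction. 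Your conclusion that $D$ is constant on pairs of distinct Legendrians is correspondingly too strong. Independently of this, step (ii) of your plan is not delivered by any isotopy extension theorem: the condition $\Lambda_t\ne\Lambda_*$ is an inequality of subsets and does not prevent $\Lambda_t\cap\Lambda_*\ne\emptyset$; at an intersection point the contact Hamiltonian would have to vanish (to keep the flow tangent to the Legendrian $\Lambda_*$) and simultaneously equal the prescribed transverse speed of $(\Lambda_t)$, a conflict you acknowledge but do not resolve.

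The paper's proof avoids any global transitivity statement. It works entirely inside a Weinstein neighborhood $\mathcal{U}\simeq J^1\Lambda_*$ (Proposition 3.1) together with a squeezing lemma (Proposition 3.2): a fixed bounded set $B$ meeting the zero section in a ball can be pushed, by a compactly supported contact isotopy preserving $\Lambda_*$, into a small ball $B'$ chosen disjoint from a given $\Lambda'\ne\Lambda_*$ (resp.\ from $\Pi(\tLambda')$ upstairs). Conjugating a contactomorphism $\varphi$ supported in $B$ by this squeezing map produces one that fixes $\Lambda'$, and the triangle inequality plus invariance yield $D(\Lambda_*,\varphi(\Lambda_*))\le 2D(\Lambda_*,\Lambda')$ (Corollary 3.3), from which both parts of the theorem follow at once. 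This displacement-of-supports argument is the standard substitute for the transitivity you were hoping for, and is what you would need to adopt to repair the proof.
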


\begin{rem}
    The proof we give of Theorem \ref{thm:discreteness} in Section \ref{se:discret} also holds for distances $D$ on $\Leg$ (resp. $\uLeg$) that are $\Gcont$-invariant (resp. $\uGcont$-invariant), i.e. $D(\varphi(\Lambda_1),\varphi(\Lambda_2))=D(\Lambda_1,\Lambda_2)$ for all $\varphi\in\Gcont$, $\Lambda_1,\Lambda_2\in\Leg$ (resp. $D(\tphi\cdot\tLambda_1,\tphi\cdot\tLambda_2)=D(\tLambda_1,\tLambda_2)$ for all $\tphi\in\uGcont$, $\tLambda_1,\tLambda_2\in\uLeg$).
\end{rem}

\subsection{Discussion}
While Theorem \ref{thm} intensively uses contact rigidity, Theorem \ref{thm:discreteness} is a consequence of contact flexibility. Indeed, obstructions to the existence of positive (contractible) loop of Legendrians, which is a keypoint to define the distance of Theorem \ref{thm}, come from hard techniques such as generating functions \cite{bhupal,chernovnemirovski1,granja2014givental,San11} or different flavours of Floer Homology \cite{albers,CCR,EKP,EP00}. Notably, recent results by Hedicke demonstrate that the contact structure is essential for the existence of such obstructions: in the smooth setting, contractible smooth positive loops do exist \cite{hedicke2025positivepathsdiffeomorphismgroups}. Concerning the contact flexibility properties involved in the proof of Theorem \ref{thm:discreteness}, we refer the reader directly to Section \ref{se:discret}.\\

Finally, let us first point out some analogies between the properties of invariant distances on $\Leg$, $\uLeg$ and bi-invariant distances on $\Gcont$, $\uGcont$ and then contrast these with the behavior in the smooth setting. On the one hand, similarly to Theorem \ref{thm:discreteness}, in \cite{FPR} it is shown that a bi-invariant distance on $\Gcont$ has to be discrete due to contact flexibility phenomena. Moreover, as in Theorem \ref{thm}, it is known that some contact manifolds exhibit sufficient contact rigidity to ensure that $\uGcont$ and $\Gcont$ are \textit{unbounded}, i.e. they admit an unbounded bi-invariant distance \cite{allais2024spectral,Arlove2023,discriminante,FPR,sandonmetrique,Zapolsky}. Conversely, on the other hand, when $M$ is closed, $\mathrm{Diff}^c_0(M\times\R^n)$, i.e. the group of diffeomorphisms that are compactly isotopic to the identity, is \textit{bounded} \cite{BIP,Tsuboi}. This boundedness, much like the existence of contractible smooth positive loops discussed above \cite{hedicke2025positivepathsdiffeomorphismgroups}, reflects the greater flexibility in the smooth setting as opposed to the contact one. Hence, it seems reasonable to conjecture that, in contrast with Theorem \ref{thm}, $\mathcal{D}(N):=\{\varphi(N)\ |\ \varphi\in\mathrm{Diff}^c_0(M)\}$ (resp. its universal cover $\widetilde{\mathcal{D}}(N)$) should not admit any unbounded $\mathrm{Diff}^c_0(M)$-invariant (resp. $\widetilde{\mathrm{Diff}^c_0}(M)$-invariant) distance when $N\subset M$ is a closed and connected submanifold  of sufficiently big codimension.


\subsection{Organization of the paper} We start by fixing some notations and definitions in Section \ref{se:preliminaire}. Then we prove Theorem \ref{thm:discreteness} in Section \ref{se:discret}, and prove Theorem \ref{thm} in Section \ref{se:proof}. In Section \ref{se:autresdistances} we compare the distance we constructed in Section \ref{se:proof} with the spectral distance \cite{allais2023spectral,Nakamura2023} and Colin-Sandon oscillation's distance \cite{discriminante}. Finally, in Section \ref{se:openquestion}, we discuss how the distance constructed in Section \ref{se:proof} could contribute to defining a new unbounded invariant distance on $\uGcont(M,\xi)$ assuming that $\conto(M,\xi)$ does not contain a positive loop -- a question which remains open at present.
\subsection*{Acknowledgement} The author thanks Stefan Nemirovski and Sheila Sandon for valuable discussions and their constant support. 

\section{Preliminaries}\label{se:preliminaire}
A cooriented contact manifold $(M,\xi)$ is a manifold $M$ endowed with a distribution $\xi$ of hyperplanes such that $TM/\xi$ is an oriented line bundle and $\xi$ is maximally non-integrable, i.e. there exists a $1$- form $\alpha\in\Omega^1(M)$ whose kernel agrees with $\xi$ and $\alpha\wedge (\ud \alpha)^n\ne 0$ where $2n+1$ is the dimension of $M$. A Legendrian is a submanifold of $M$ tangent to $\xi$ of dimension $n$. In this article, all the Legendrians are considered to be closed and connected.  Once a contact form $\alpha\in\Omega^1(M)$ supporting the contact distribution $\xi$ is fixed, i.e. $\ker\alpha=\xi$, one can associate to it a vector field $R_\alpha$, called the Reeb vector field, uniquely defined by the equations $\alpha(R_\alpha)\equiv 1$ and $\ud \alpha(R_\alpha,\cdot)\equiv 0$. If $R_\alpha$ is complete, its flow is called the Reeb flow, and we denote by $\varphi_\alpha^t$ its flow at time $t\in\R$. For the rest of this Section \ref{se:preliminaire} we consider an arbitrary cooriented contact manifold $(M,\xi)$.

\subsection{Isotopies}\label{se: isotopies}
Let $I$ be an interval of $\R$ containing $0$, and $(\Lambda_t)_{t\in I}$ be a $I$-family of Legendrians lying inside $(M,\xi)$. We say that it is a Legendrian isotopy if there exists a smooth map $\varphi : I\times \Lambda_0\to M$ whose restriction to $\{t\}\times \Lambda_0$ is an embedding of image $\Lambda_t$. We call such map a parametrization of $(\Lambda_t)$. Similarly, a $I$-family of contactomorphisms $(\varphi_t)_{t\in I}\subset\cont(M,\xi)$ is a (compactly supported) contact isotopy if the map $I\times M \to M,\ (t,x)\mapsto \varphi_t(x)$ is smooth (and its restriction to $\{t\}\times M$ is compactly supported). When $I=[0,1]$ we drop it from the subscript and simply write $(\Lambda_t)$ or $(\varphi_t)$. 

\subsection{Contact Hamiltonians}
Let $\alpha$ be a contact form supporting the contact distribution $\xi$. The set $\chi(M,\xi):=\{X\text{ vector field of } M \ |\ \exists f : M\to\R,\ \mathcal{L}_X\alpha=f\alpha\}$ depends only on the contact structure and not the contact form $\alpha$; it consists of the vector fields whose flow, where defined, preserves the contact distribution. Denoting by $(X_t)_{t\in I}\subset \chi(M,\xi)$ the speed of a contact isotopy $(\varphi_t)_{t\in I}\subset\cont(M,\xi)$, i.e. $\frac{d}{dt}\varphi_t=X_t\circ\varphi_t$, the $\alpha$-contact Hamiltonian of $(\varphi_t)_{t\in I}$ is the time-dependant function $H : I\times M\to\R$, $t\mapsto\alpha(X_t(x))$. Conversely, since the map $\Phi_\alpha:\chi(M,\xi)\to C^\infty(M,\R),\ X\mapsto \alpha(X)$ is a bijection, to any smooth map $H\in C^\infty(I\times M,\R)$ one can associate an isotopy $(X_t:=\Phi_\alpha^{-1}(H(t,\cdot)))_{t\in I}$.  When $(X_t)_{t\in I}$ is complete, we say that its flow $(\varphi_t)_{t\in I} \subset\cont(M,\xi)$, i.e. the unique isotopy such that $\frac{d}{dt}\varphi_t=X_t(\varphi_t)$ for all $t\in I$ and $\varphi_0=\id$, is generated by the $\alpha$-contact Hamiltonian $H$. 

\subsection{Universal covers}\label{se:universal cover}
Let us fix a closed Legendrian $\Lambda_*\subset (M,\xi)$. We consider on $\mathcal{P}(\Lambda_*)$ the space of Legendrian $[0,1]$-isotopies starting at $\Lambda_*$
(resp. on $\mathcal{P}(M,\xi)$ the space of contact $[0,1]$-isotopies starting at the identity, resp. on $\mathcal{P}_c(M,\xi)$ the space of compactly supported contact $[0,1]$-isotopies starting at the identity)
the equivalence relation $\sim$ of being isotopic relatively to endpoints. More precisely, let $(x_t),(y_t)\in\mathcal{P}(\Lambda_*)$ (resp. $(x_t),(y_t)\in\mathcal{P}(M,\xi)$, resp. $(x_t),(y_t)\in\mathcal{P}_c(M,\xi)$), then $(x_t)\sim(y_t)$ if $x_1=y_1=:z$ and there exists a smooth map $\Gamma : [0,1]\times [0,1]\times \Lambda_*\to M$,  (resp. $\Gamma : [0,1]\times [0,1]\times M\to M$) such that $\Gamma(s,t,\cdot) : \Lambda_*\hookrightarrow M$ is an embedding of image a Legendrian $z_{s,t}$ (resp. $z_{s,t}:=\Gamma(s,t,\cdot) : M \to M$ is a contactomorphism, resp. a compactly supported contactomorphism) such that $z_{s,0}=\Lambda_*$ (resp. $z_{s,0}=\id$), $z_{s,1}=z$,
  $(z_{0,t})=(x_t)$ and $(z_{1,t})=(y_t)$. As a set, the universal cover $\uLeg(\Lambda_*)$ of $\Leg(\Lambda_*)$ (resp. $\widetilde{\cont}$, resp. $\uGcont$) can be thought as the quotient space $\mathcal{P}(\Lambda_*)/\sim$ (resp. $\mathcal{P}(M,\xi)/\sim$, resp. $\mathcal{P}_c(M,\xi)/\sim$) and the covering map $\Pi : \uLeg(\Lambda_*)\to \Leg(\Lambda_*)$ (resp. $\widetilde{\cont}\to\cont$, resp. $\uGcont \to\Gcont$) as $\Pi([(x_t)])=x_1$. We denote by $\pi_1(\Leg(\Lambda_*)):=\Pi^{-1}(\Lambda_*)\subset\uLeg(\Lambda_*)$ the fundamental group. Finally, timewise composition turns $\uGcont$ and $\widetilde{\conto}$ into groups, i.e. $[(\varphi_t)]\cdot[(\psi_t)]:=[(\varphi_t\circ\psi_t)]$, and  there is a natural action of $\widetilde{\cont}$ and $\uGcont$ on $\uLeg$ that can also be described by timewise composition, i.e. $[(\varphi_t)]\cdot [(\Lambda_t)]:=[(\varphi_t(\Lambda_t))]$. See the Appendix for a more topological description of $\uLeg(\Lambda_*)$. 

\subsection{Binary relation and orderability}\label{se:partial order}
 Fix a contact form $\alpha$ supporting $\xi$ and its coorientation, i.e. $\ker\alpha=\xi$ and $R_\alpha$ descends to a positive section of the oriented line bundle $TM/\xi\to M$. A Legendrian isotopy $(\Lambda_t)$ is non-negative (resp. positive) if $\text{there exists a parametrization }\varphi : [0,1]\times \Lambda_0\to M\text{ of }(\Lambda_t)\text{ such that  }\alpha\left(\frac{d}{dt}\varphi_t\right)\geq 0$ (resp. $\alpha\left(\frac{d}{dt}\varphi_t\right)> 0$). A natural binary relation $\cleq$ exists on $\Leg(\Lambda_*)$: $\Lambda\cleq \Lambda'$ if there exists a non-negative Legendrian isotopy $(\Lambda_t)$ connecting $\Lambda$ to $\Lambda'$, i.e. $\Lambda_0=\Lambda$ and $\Lambda_1=\Lambda'$. The definition of $\cleq$ depends neither on the choice of $\alpha$ nor the choice of the parametrization.  Similarly, one defines $\cleq$ on  $\uLeg(\Lambda_*)$ by: $\tLambda\cleq\tLambda'$ if there exists a non-negative isotopy $(\tLambda_t)\subset\uLeg(\Lambda_*)$, i.e. $(\Pi(\tLambda_t))$ is a non-negative Legendrian isotopy, such that $\tLambda_0=\tLambda$ and $\tLambda_1=\tLambda'$. In the same veins, one defines the binary relation $\cleq$ on $\conto(M,\xi)$ (resp. $\widetilde{\conto}(M,\xi)$) as follow: $\varphi\cleq \varphi'$ (resp. $\tphi\cleq\tphi'$) if there exists a non-negative $\alpha$-contact Hamiltonian $H : [0,1]\times M\to\R_{\geq 0}$ generating a contact isotopy $(\psi_t)$ such that $\psi_1=\varphi'\cdot\varphi^{-1}$ (resp. $[(\psi_t)]=\tphi'\cdot\tphi^{-1}$). These binary relations were first studied by Eliashberg and Polterovich \cite{EP00}. \\
 
For any $\Lambda\cleq\Lambda'\in\Leg$ (resp. $\tLambda\cleq\tLambda'\in\uLeg)$ and any $\varphi\cleq\varphi'\in\conto$ (resp. $\tphi\cleq\tphi'\in\widetilde{\conto})$ we have $\varphi(\Lambda)\cleq\varphi'(\Lambda')$ (resp. $\tphi\cdot\tLambda\cleq\tphi'\cdot\tLambda'$) \cite[Prop 2.9]{allais2023spectral}, in particular $\cleq$ on $\Leg$ (resp. $\uLeg$) is $\conto$-invariant (resp. $\widetilde{\conto}$-invariant). 
In addition, $\cleq$ on $\mathcal{O}$ -- $\mathcal{O}$ being one of the space $\Leg$, $\uLeg$, $\conto(M,\xi)$ or $\widetilde{\conto}(M,\xi)$ -- is transitive and reflexive.  If $\cleq$ is antisymmetric, hence a partial order, $\mathcal{O}$ is  said to be orderable. Antisymmetry of $\cleq$ on $\Leg$ (resp. $\uLeg$)  is equivalent to the absence of positive loops in $\Leg$ (resp. $\uLeg$) \cite[Prop 4.7]{chernovnemirovski2}. Similarly, when $M$ is closed, antisymmetry of $\cleq$ on $\conto=\Gcont$ (resp. $\widetilde{\conto}=\uGcont$) is equivalent to the absence of positive loops in $\Gcont$ (resp. $\uGcont$) \cite[Criterion 1.2.C]{EP00}.

\section{Proof of Theorem \ref{thm:discreteness}}\label{se:discret}

As mentionned in the introduction, the proof of Theorem \ref{thm:discreteness} exploits different contact flexibility phenomena. First, any closed Legendrian in a contact manifold has a neighborhood which is contactomorphic to the $1$-jet bundle of this Legendrian endowed with its standard contact distribution. Second, in the $1$-jet bundle, any two bounded domains intersecting sufficiently well the zero-section can be displaced and squeezed one into the other while preserving the zero-section. More precisely, let $\pi : J^1\Lambda \simeq T^*\Lambda\times\R\to \Lambda$ be the $1$-jet bundle of a closed manifold $\Lambda$ that we endow with its canonical contact distribution $\xi_{\mathrm{can}}:=\ker\alpha_\mathrm{can}$ where $\alpha_\mathrm{can}:=\ud z-\lambda$, $\lambda$ being the Liouville $1$-form of the cotangent $T^*\Lambda$, i.e. $\lambda(q,p)=p\circ\ud_{(q,p)}\pi$, and $z$ being the coordinate on $\R$. From now on $\Lambda_*$ is a closed and connected Legendrian of a cooriented contact manifold $(M,\xi)$. 

\begin{prop}\label{prop:weinstein}
There exists an open neighborhood $\mathcal{U}\subset M$ of $\Lambda_*$ such that $(\mathcal{U},\xi|_{\mathcal{U}})$ is contactomorphic to $(J^1\Lambda_*,\xi_{\mathrm{can}})$ via a contactomorphism $\Phi$ sending $\Lambda_*$ to the zero-section. 
\end{prop}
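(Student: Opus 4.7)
This is Weinstein's Legendrian neighborhood theorem. The approach is the classical two-step strategy: build a diffeomorphism matching the first-order data on both sides, then upgrade it to a contactomorphism via Moser's trick. Fix a contact form $\alpha$ on $M$ with $\ker\alpha = \xi$ in a neighborhood of $\Lambda_*$, and write $\alpha_{\mathrm{can}} = \ud z - \lambda$ on $J^1\Lambda_*$.

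For the first step, the form $\alpha$ induces along $\Lambda_*$ a splitting $TM|_{\Lambda_*} = T\Lambda_* \oplus (\xi|_{\Lambda_*}/T\Lambda_*) \oplus \R\langle R_\alpha\rangle$; since $T\Lambda_*$ is Lagrangian in the symplectic bundle $(\xi,\ud\alpha)$, contraction with $\ud\alpha$ identifies $\xi|_{\Lambda_*}/T\Lambda_*$ with $T^*\Lambda_*$, yielding exactly the same data as the splitting of $T(J^1\Lambda_*)$ along $0_{\Lambda_*}$ read off from $\alpha_{\mathrm{can}}$. Using the exponential map of an auxiliary Riemannian metric on $M$, I would build a diffeomorphism $\psi_0$ from an open neighborhood $\mathcal{V}_0$ of $0_{\Lambda_*}$ in $J^1\Lambda_*$ onto an open neighborhood of $\Lambda_*$ in $M$ which sends $0_{\Lambda_*}$ to $\Lambda_*$ and realises the identification above at the level of differentials along $0_{\Lambda_*}$.

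For the Moser step, set $\alpha_0 := \alpha_{\mathrm{can}}$ and $\alpha_1 := \psi_0^*\alpha$. Both vanish on $0_{\Lambda_*}$ (it is Legendrian for each) and have the same differential there; a fibrewise rescaling of $\psi_0$ also lets me arrange $\alpha_1|_{0_{\Lambda_*}} = \alpha_0|_{0_{\Lambda_*}}$ as covectors. Then $\alpha_t := (1-t)\alpha_0 + t\alpha_1$ is a smooth path of contact forms on a possibly smaller neighborhood $\mathcal{V}\subset \mathcal{V}_0$, all coinciding to first order along $0_{\Lambda_*}$. I would then solve the standard Moser equation
\[
\mathcal{L}_{X_t}\alpha_t + (\alpha_1 - \alpha_0) = \mu_t\,\alpha_t
\]
for a time-dependent vector field $X_t$ and a smooth function $\mu_t$; this reduces to a pointwise algebraic problem once $X_t$ is decomposed along the Reeb splitting $TJ^1\Lambda_* = \R\langle R_{\alpha_t}\rangle \oplus \ker\alpha_t$, using the non-degeneracy of $\ud\alpha_t|_{\ker\alpha_t}$.

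The main delicate point, and the one I expect to require the most care, is to verify that because $\alpha_1 - \alpha_0$ vanishes to first order along $0_{\Lambda_*}$, the solution $X_t$ vanishes there as well. This is what guarantees that its flow $\phi_t$ fixes $0_{\Lambda_*}$ pointwise and is defined on the whole interval $[0,1]$ on some neighborhood $\mathcal{V}'\subset\mathcal{V}$. Once this is secured, $\phi_1^*\alpha_1 = e^{g_1}\alpha_0$ for some smooth function $g_1$, so $\Phi := \phi_1^{-1}\circ\psi_0^{-1}$ is a contactomorphism from some open neighborhood $\mathcal{U}\subset M$ of $\Lambda_*$ onto the neighborhood $\mathcal{V}'$ of $0_{\Lambda_*}$ in $J^1\Lambda_*$ — which gives the conclusion of the proposition, with $J^1\Lambda_*$ understood via the standard identification of any such neighborhood with the whole jet space through the contact dilations generated by $p\,\partial_p + z\,\partial_z$, whose flow rescales $\alpha_{\mathrm{can}}$ and preserves the zero-section.
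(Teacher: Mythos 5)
Your Moser-trick argument is a correct (if standard) route to the \emph{local} statement: a neighborhood of $\Lambda_*$ in $M$ contactomorphic to a neighborhood $\mathcal{V}'$ of the zero-section in $J^1\Lambda_*$. The paper simply cites this step (Lychagin), so re-proving it is fine, and your identification of the delicate point (that $X_t$ vanishes along $0_{\Lambda_*}$ because $\alpha_1-\alpha_0$ vanishes to first order there) is the right one.

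However, the proposition asserts a contactomorphism onto the \emph{entire} jet space $(J^1\Lambda_*,\xi_{\mathrm{can}})$, and this globalization is precisely the content the paper spends its effort on; your final clause dismisses it with ``the standard identification of any such neighborhood with the whole jet space through the contact dilations generated by $p\,\partial_p+z\,\partial_z$,'' and that is a genuine gap. The time-$t$ flow of the Liouville field $X=p\,\partial_p+z\,\partial_z$ is indeed a contactomorphism preserving the zero-section, but for any fixed $t$ it maps a bounded neighborhood onto another bounded neighborhood; no single time of this flow (nor any naive fibrewise rescaling, which fails to preserve $\ker\alpha_{\mathrm{can}}$) carries a neighborhood of the zero-section onto all of $J^1\Lambda_*$. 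The paper's Lemma on $U_g^\varepsilon=D_gT^*\Lambda_*\times(-\varepsilon,\varepsilon)$ handles this by a telescoping construction (following Chekanov--van Koert--Schlenk): using a cut-off of the contact Hamiltonian $H_X(q,p,z)=z$ one builds contactomorphisms $\Phi_{a,b}^{c,d}$ agreeing with $\varphi_X^{c-a}$ near $\partial U_a$ and with $\varphi_X^{d-b}$ near $\partial U_b$, and then composes an infinite sequence of these on nested shells exhausting $J^1\Lambda_*$. You would need to either reproduce such an argument or give another proof that some neighborhood of the zero-section is contactomorphic to the whole of $J^1\Lambda_*$ relative to the zero-section; as written, your proof establishes only the classical Weinstein neighborhood theorem, not the proposition.
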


We say that an open set $D$ of a manifold $X$ is an open ball if it is diffeomorphic to $\R^n$, $n$ being the dimension of $X$. Also, by a slight abuse of notation we still denote by $\Lambda$ the image of the zero-section $\sigma_0 : \Lambda\to  J^1\Lambda$.

\begin{prop}\label{prop:contactflexibility}
  Let $B,B'$ be two subsets of $J^1\Lambda_*$ such that $D:=B\cap\Lambda_*$, $D':=B'\cap \Lambda_*$ are open balls of $\Lambda_*$. If $B$ is contained in a compact subset of $\pi^{-1}(D)$, there exists a compactly supported contact isotopy $(\eta_t)\subset\Gcont(J^1\Lambda_*,\xi_{\mathrm{can}})$ such that $\eta_0=\id$, $\eta_t(\Lambda_*)=\Lambda_*$ and $\eta_1(B)\subset B'$.
\end{prop}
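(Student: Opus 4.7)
The plan is to produce $(\eta_t)$ as the concatenation of two contact isotopies: a \emph{horizontal} one, obtained by lifting an ambient diffeotopy of $\Lambda_*$, that moves $\pi(B)$ inside $D'$; followed by a \emph{vertical} contact squeezing toward the zero-section inside $\pi^{-1}(D')\cong J^1\R^n$ that drops the result inside $B'$.

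For the horizontal step, set $K:=\overline{\pi(B)}$, which is a compact subset of the open ball $D$ by the relative compactness hypothesis. Since $\Lambda_*$ is connected and $D,D'$ are both open balls, the disc isotopy theorem supplies a compactly supported smooth ambient diffeotopy $(f_t)_{t\in[0,1/2]}$ of $\Lambda_*$ with $f_0=\id$ and $f_{1/2}(K)\subset D'$. Its canonical cotangent lift
\[
\tilde f_t(q,p,z):=\bigl(f_t(q),\,((\ud f_t(q))^{-1})^*p,\,z\bigr)
\]
is a strict contactomorphism of $(J^1\Lambda_*,\alpha_{\mathrm{can}})$ since the cotangent lift preserves $\lambda$ and the $z$-coordinate is untouched, so $(\tilde f_t)$ is a compactly supported contact isotopy preserving the zero-section; moreover $\tilde f_{1/2}(B)$ is relatively compact in $\pi^{-1}(D')$.

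For the vertical step, working in a trivialization $\pi^{-1}(D')\cong J^1\R^n$, I would use the conformal contact scaling $\psi_s(q,p,z):=(q,e^{-s}p,e^{-s}z)$, which satisfies $\psi_s^*\alpha_{\mathrm{can}}=e^{-s}\alpha_{\mathrm{can}}$ and whose $\alpha_{\mathrm{can}}$-contact Hamiltonian is $H(q,p,z)=-z$. Since $H$ vanishes on the zero-section, $(\psi_s)$ fixes it pointwise. As $B'$ is open with $B'\cap\Lambda_*=D'$, a standard compactness argument provides, for any fixed auxiliary metric $g$ on $\Lambda_*$ and any compact $K'\subset D'$ containing $f_{1/2}(K)$, some $\delta>0$ with $\{(q,p,z):q\in K',\,|p|_g<\delta,\,|z|<\delta\}\subset B'$; taking $s_0>0$ sufficiently large then forces $\psi_{s_0}\bigl(\tilde f_{1/2}(B)\bigr)$ inside this tube, hence inside $B'$.

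To globalize the squeezing with compact support on $J^1\Lambda_*$, I would truncate the Hamiltonian: pick $\chi\in C^\infty_c(J^1\Lambda_*,[0,1])$ supported in $\pi^{-1}(D')$ and equal to $1$ on a compact neighborhood of the full trajectory $\bigcup_{s\in[0,s_0]}\psi_s\bigl(\overline{\tilde f_{1/2}(B)}\bigr)$. The modified Hamiltonian $-\chi z$ still vanishes on the zero-section, so -- and this is the crucial point -- the generated isotopy $(\psi^\chi_s)\subset\Gcont(J^1\Lambda_*,\xi_{\mathrm{can}})$ still preserves $\Lambda_*$, is compactly supported, and agrees with $\psi_s$ on $\tilde f_{1/2}(B)$ for $s\in[0,s_0]$. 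Concatenating and smoothing the isotopies $(\tilde f_{2t})_{t\in[0,1/2]}$ and $(\psi^\chi_{(2t-1)s_0}\circ\tilde f_{1/2})_{t\in[1/2,1]}$ at $t=1/2$ yields the desired $(\eta_t)$. The main conceptual obstacle I expect is the horizontal stage, which requires the disc theorem to localize the problem into $\pi^{-1}(D')$; once localized, the vertical squeezing is classical and works precisely because the scaling's Hamiltonian $H=-z$ vanishes on the zero-section of $J^1\R^n$.
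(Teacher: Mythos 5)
Your proof is essentially the paper's: a fibrewise cotangent lift of a diffeotopy of $\Lambda_*$ moving $D$ into $D'$, followed by the negative-time Liouville flow $(q,p,z)\mapsto(q,e^{-s}p,e^{-s}z)$ (whose $\alpha_{\mathrm{can}}$-Hamiltonian is $-z$) to squeeze into $B'$, with the Hamiltonian then cut off by a bump function equal to $1$ on a neighborhood of the zero-section and of the trajectory of $B$; the paper merely composes the two stages as $\varphi_X^{-tT}\circ\psi_t$ instead of concatenating them. One slip to fix: the cotangent lift $\tilde f_t$ is \emph{not} compactly supported in $J^1\Lambda_*$ (its support contains entire non-compact fibres over the moving region of $\Lambda_*$), so its contact Hamiltonian --- which is linear in $p$ and hence vanishes on the zero-section --- must also be truncated by the same cutoff device you already apply to the vertical stage, exactly as the paper does for the combined isotopy.
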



We admit Proposition \ref{prop:weinstein} and Proposition \ref{prop:contactflexibility} for the moment. Before deducing Theorem \ref{thm:discreteness} from them, we fix some notations that we conserve until we prove Theorem \ref{thm:discreteness}.  Let $\Phi$ be a contactomorphism between an open neighborhood $\mathcal{U}\subset M$ of $\Lambda_*$ and $J^1\Lambda_*$ sending $\Lambda_*$ to the zero-section whose existence is guaranteed by Proposition \ref{prop:weinstein}. Let $B\subset J^1\Lambda_*$ be any non-empty open set of $J^1\Lambda_*$ with compact closure such that $B\cap \Lambda_*=\pi(B)$ is an open ball of $\Lambda_*$. Denoting by $\mathcal{B}:=\Phi^{-1}(B)\subset M$ and by $\tLambda_*$ the element in $\uLeg(\Lambda_*)$ that can be represented by the constant Legendrian isotopy.

\begin{cor}\label{cor:flexible}

  Let $D$ be an invariant distance on $\Leg(\Lambda_*)$ (resp. $\uLeg(\Lambda_*))$. Then
    \[D(\Lambda_*,\varphi(\Lambda_*))\leq 2 D(\Lambda_*,\Lambda')\quad \text{(resp. } D(\tLambda_*,\tphi\cdot\tLambda_*)\leq 2D(\tLambda_*,\tLambda'))\] for any $\varphi\in\Gcont(\mathcal{B},\xi|_{\mathcal{B}})\subset\Gcont(M,\xi)$ (resp. $\tphi\in\uGcont(\mathcal{B},\xi|_\mathcal{B})\subset\uGcont(M,\xi)$) and any $\Lambda'\in\Leg(\Lambda_*)\setminus\{\Lambda_*\}$ (resp. $\tLambda'\in\uLeg(\Lambda_*)\setminus\pi_1(\Leg(\Lambda_*))$).
\end{cor}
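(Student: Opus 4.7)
The plan is to use Proposition~\ref{prop:contactflexibility} to exhibit a contactomorphism $\eta \in \Gcont(M,\xi)$ preserving $\Lambda_*$ that pushes $\mathcal{B}$ into an open set disjoint from $\Lambda'$; the conjugate $\psi := \eta\varphi\eta^{-1}$ then fixes $\Lambda'$ pointwise, and the inequality follows by combining the invariance of $D$ with the triangle inequality. Since $\Lambda_*$ and $\Lambda'$ are distinct closed connected Legendrians of the same dimension, neither contains the other, and so I can pick $p \in \Lambda_* \setminus \Lambda'$, an open ball $D' \subset \Lambda_*$ about $p$ disjoint from $\Lambda'$, and an open subset $B' \subset J^1\Lambda_*$ with compact closure in $\pi^{-1}(D')$ small enough that $\mathcal{B}' := \Phi^{-1}(B')$ is disjoint from $\Lambda'$ in $M$. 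Applying Proposition~\ref{prop:contactflexibility} to $B$ and $B'$ yields a compactly supported isotopy $(\eta_t) \subset \Gcont(J^1\Lambda_*, \xi_\mathrm{can})$ with $\eta_t(\Lambda_*) = \Lambda_*$ and $\eta_1(B) \subset B'$; transferring via $\Phi$ gives an isotopy in $\Gcont(M,\xi)$ with the same properties, and $\eta := \eta_1$ satisfies $\eta(\mathcal{B}) \subset \mathcal{B}' \subset M \setminus \Lambda'$.

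Setting $\psi := \eta\varphi\eta^{-1}$, the support $\eta(\mathrm{supp}\,\varphi) \subset \eta(\mathcal{B})$ is disjoint from $\Lambda'$, so $\psi(\Lambda') = \Lambda'$. Invariance of $D$ together with $\eta(\Lambda_*) = \Lambda_*$ yields
\[
D(\Lambda_*, \varphi(\Lambda_*)) = D(\eta(\Lambda_*), \eta(\varphi(\Lambda_*))) = D(\Lambda_*, \psi(\Lambda_*)),
\]
and the triangle inequality, combined with $\psi(\Lambda') = \Lambda'$ and one more application of invariance, gives
\[
D(\Lambda_*, \psi(\Lambda_*)) \leq D(\Lambda_*, \Lambda') + D(\Lambda', \psi(\Lambda_*)) = D(\Lambda_*, \Lambda') + D(\psi(\Lambda'), \psi(\Lambda_*)) = 2\,D(\Lambda_*, \Lambda'),
\]
completing the $\Leg$ case.

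For the universal cover statement I repeat the same manipulations: lifting $(\eta_t)$ to $\teta \in \uGcont$, the constancy of the path $t \mapsto \eta_t(\Lambda_*) = \Lambda_*$ implies $\teta \cdot \tLambda_* = \tLambda_*$; setting $\tpsi := \teta\,\tphi\,\teta^{-1}$, invariance of $D$ on $\uLeg$ together with the triangle inequality reduce the desired bound to the identity $\tpsi \cdot \tLambda' = \tLambda'$. Proving this is the main obstacle, and it is strictly stronger than its $\Leg$ analogue: set-theoretic fixation of $\Pi(\tLambda')$ by the time-$1$ contactomorphism is not enough; one must exhibit a homotopy, rel endpoints, between the Legendrian isotopy $t \mapsto \eta_t\phi_t\eta_t^{-1}(\Lambda'_t)$ and some representative $(\Lambda'_t)$ of $\tLambda'$. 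The natural sufficient condition is that the trajectory $\eta_t(\mathcal{B})$ stay disjoint from $\Pi(\tLambda')$ for every $t \in [0,1]$, in which case the family $(\eta_{st}\phi_{st}\eta_{st}^{-1}(\Lambda'_t))_{s \in [0,1]}$ provides the required homotopy. This disjointness can be arranged by supporting the displacement isotopy inside $\Phi^{-1}(\pi^{-1}(W))$ for a connected open subset $W \subset \Lambda_*$ containing $D \cup D'$ and disjoint from $\Pi(\tLambda') \cap \Lambda_*$; handling the case in which $D$ itself meets $\Pi(\tLambda')$ requires a preliminary displacement step and is the subtlety at the heart of the universal cover argument.
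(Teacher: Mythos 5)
Your argument for $\Leg(\Lambda_*)$ is correct and essentially the paper's: displace $\mathcal{B}$ off $\Lambda'$ by a $\Lambda_*$-preserving compactly supported contactomorphism, conjugate $\varphi$ by it, and apply invariance plus the triangle inequality. For $\uLeg(\Lambda_*)$, however, you correctly isolate the key identity $\tpsi\cdot\tLambda'=\tLambda'$ but do not prove it. Your sufficient condition --- that $\eta_t(\mathcal{B})$ avoid $\Pi(\tLambda')$ for \emph{all} $t\in[0,1]$ --- is never verified: $\mathcal{B}\cap\Lambda_*$ need not be disjoint from $\Pi(\tLambda')$ to begin with, and the ``preliminary displacement step'' you defer for that case is precisely the content you would have to supply. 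As written, the universal-cover half is incomplete.

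The gap closes with no extra displacement if you conjugate by the \emph{time-one map} instead of by the moving isotopy. In $\uGcont$ one has $\teta\,\tphi\,\teta^{-1}=[(\eta_1\circ\varphi_t\circ\eta_1^{-1})]$, since $(s,t)\mapsto \eta_{t+s(1-t)}\circ\varphi_t\circ\eta_{t+s(1-t)}^{-1}$ is a homotopy rel endpoints; this representative is supported in $\eta_1(\mathcal{B})\subset\mathcal{B}'$ for \emph{every} $t$. It then suffices to know that any $\widetilde{\mu}\in\uGcont(\mathcal{B}',\xi|_{\mathcal{B}'})$ fixes every $\tLambda'$ with $\Pi(\tLambda')\cap\mathcal{B}'=\emptyset$. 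This follows from the concatenation description of the action: $[(\mu_t)]\cdot[(\Lambda'_t)]=[(\Lambda'_t)*(\mu_t(\Lambda'_1))]$, because the diagonal path and the concatenated path in the parameter square $[0,1]^2$ are homotopic rel endpoints; the second factor is the constant path since each $\mu_t$ is the identity near $\Lambda'_1=\Pi(\tLambda')$. This is exactly the paper's route (there with $\widetilde{\mu}=[(\psi_1\circ\varphi_t^{-1}\circ\psi_1^{-1})]$): only the \emph{endpoint} $\Pi(\tLambda')$, not the whole trace of the Legendrian isotopy, needs to be avoided, so your worry about intermediate times is resolved by changing the representative of the conjugation rather than by strengthening the displacement.
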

\begin{proof}[Proof of Corollary \ref{cor:flexible}]
We prove Corollary \ref{cor:flexible} in the case of $\uLeg(\Lambda_*)$, the case of $\Leg(\Lambda_*)$ being similar and easier. Since $\tLambda'\in \uLeg(\Lambda_*)\setminus\pi_1(\Lambda_*)$, there exists an open set $\mathcal{B}'\subset \mathcal{U}$ such that $\mathcal{B}'\cap \Lambda_*$ is a non-empty ball of $\Lambda_*$ and $\mathcal{B}'\cap\Pi(\tLambda')=\emptyset$. Putting $B':=\Phi(\mathcal{B}')$,  thanks to Proposition \ref{prop:contactflexibility},there exists $(\eta_t)\subset \Gcont(J^1\Lambda_*,\xi_{\mathrm{can}})$, a compactly supported contact isotopy starting at the identity,  such that $\eta_t(\Lambda_*)=\Lambda_*$ and $\eta_1(B)\subset B'$. Thus, the contact isotopy $(\psi_t)\subset\Gcont(M,\xi)$
\[\psi_t(x):=\begin{cases}
    \Phi^{-1}(\eta_t(\Phi(x))) & \text{if } x\in \mathcal{U}\\
    x &\text{otherwise},
\end{cases}\]
is such that $\psi_t(\Lambda_*)=\Lambda_*$ and $\psi_1(\mathcal{B})\subset \mathcal{B}'$. Consider now a contact isotopy $(\varphi_t)\subset\Gcont(\mathcal{B},\xi|_{\mathcal{B}})$ representing $\tphi\in\uGcont(\mathcal{B},\xi|_{\mathcal{B}})$. Then $\widetilde{\mu}:=[(\psi_1\circ\varphi_t^{-1}\circ\psi_1^{-1})]$ is an element of $\uGcont(\mathcal{B}',\xi|_{\mathcal{B}'})$ and so $\tLambda'=\widetilde{\mu}\cdot\tLambda'$. Denoting by $\tLambda=\tphi\cdot\tLambda_*$ and $\tpsi:=[(\psi_t)]$, we deduce that,
\[\begin{aligned}D(\tLambda_*,\tLambda)=D(\tpsi\cdot\tLambda_*,\tpsi\cdot\tLambda)=D(\tLambda_*,\tpsi\cdot\tLambda)&\leq D(\tLambda_*,\tLambda')+D(\tLambda',\tpsi\cdot\tLambda)\\
&=D(\tLambda_*,\tLambda')+D(\widetilde{\mu}\cdot\tLambda',\widetilde{\mu}\cdot\tpsi\cdot\tLambda)\\
&=2D(\tLambda_*,\tLambda')\qedhere
\end{aligned}\]\end{proof}

Theorem \ref{thm:discreteness} is a direct consequence of the previous corollary.

\begin{proof}[Proof of Theorem \ref{thm:discreteness}]\
 Note that by invariance of $D$ it is enough to show that there exists $C>0$ such that $D(\Lambda_*,\Lambda)>C$ for any $\Lambda\in\Leg(\Lambda_*)\setminus\{\Lambda_*\}$ (resp. $D(\tLambda_*,\tLambda)>C$ for any $\tLambda\in\uLeg(\Lambda_*)\setminus\pi_1(\Leg(\Lambda_*))$). Suppose by contradiction that there exists a sequence $(\Lambda_n)_{n\in\N}\subset\Leg(\Lambda_*)\setminus\{\Lambda_*\}$ (resp. $(\tLambda_n)_{n\in\N}\subset\uLeg(\Lambda_*)\setminus\pi_1(\Leg(\Lambda_*))$) such that the positive sequence $(D(\Lambda_*,\Lambda_n))_{n\in\N}$ (resp. $(D(\tLambda_*,\tLambda_n))_{n\in\N}$) goes to $0$ as $n$ goes to infinity. Consider $\varphi\in\Gcont(\mathcal{B},\xi|_{\mathcal{B}})$ (resp. $\tphi\in\uGcont(\mathcal{B},\xi|_{\mathcal{B}})$) such that $\varphi(\Lambda_*)\ne\Lambda_*$ (resp. $\tphi\cdot\tLambda_*\ne\tLambda_*$). By Corollary \ref{cor:flexible}, $D(\Lambda_*,\varphi(\Lambda_*))\leq 2 D(\Lambda_*,\Lambda_n)$ (resp. $D(\tLambda_*,\tphi\cdot\Lambda_*)\leq 2D(\tLambda_*,\tLambda_n)$) for all $n\in\N$, and so leads to the contradiction $D(\Lambda_*,\varphi(\Lambda_*))=D(\tLambda_*,\tphi\cdot\tLambda_*)=0$.\end{proof}

\subsection{Proof of Proposition \ref{prop:weinstein} and  \ref{prop:contactflexibility} }\label{se:preuve des prop}

To prove Proposition \ref{prop:weinstein} and Proposition \ref{prop:contactflexibility}  we use the Liouville vector field. More precisely, denote by $L : T^*\Lambda_* \to T(T^*\Lambda_*)$ the Liouville vector field of $(T^*\Lambda_*,\ud \lambda)$, i.e. the unique vector field of $T^*\Lambda_*$ satisfying $\ud\lambda(L,\cdot)=\lambda$. We call the vector field $X$ on $J^1\Lambda_*$ defined by $X((q,p),z)=(L(q,p),z)\in T_{(q,p)}T^*M\times T_z\R\simeq T_{(q,p)}T^*M\times\R$ the Liouville vector field of $J^1\Lambda_*$. Note that $X$ is a complete vector whose flow $\varphi_X^t$ at time $t\in\R$ is a contactomorphism of $(J^1\Lambda,\xi_{\mathrm{can}})$.
Moreover, for all $t\in\R$, $\varphi_X^t$ preserves the zero-section and the fibers, i.e. $\varphi_X^t(\Lambda_*)=\Lambda_*$ and $\varphi_X^t(\pi^{-1}(x))=\pi^{-1}(x)$ for all $x\in\Lambda_*$, and it can be used to squeeze the latter to the former, more formally:

\begin{lem}\label{lem:Liouville}
Let $D'\subset\Lambda_*$ be a non-empty subset of $\Lambda_*$. For any set  $B$ contained in a compact subset of $\pi^{-1}(D')$ and any open neighborhood $B'\subset J^1\Lambda_*$ of $D'$, there exists $T>0$ such that $\varphi_X^{-t}(B)\subset B'$ for all $t\geq T$. 
\end{lem}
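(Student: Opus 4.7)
The plan is to identify the flow $\varphi_X^t$ explicitly and then exploit its contracting behaviour together with a compactness argument. In canonical coordinates $(q,p,z)$ on $J^1\Lambda_* = T^*\Lambda_*\times\R$, the Liouville vector field on $T^*\Lambda_*$ is the radial field $L = p\,\partial_p$, so by the very definition given just before the lemma,
\[
X_{((q,p),z)} = (L(q,p),\,z) = p\,\partial_p + z\,\partial_z,
\]
whose flow is
\[
\varphi_X^t(q,p,z) = (q,\,e^t p,\,e^t z).
\]
Two features of this formula drive the whole proof: $\varphi_X^{-t}$ preserves the fibres of $\pi$ (the base coordinate $q$ is untouched), and it contracts both fibre coordinates exponentially toward $0$.

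Next I would choose an auxiliary Riemannian metric on the closed manifold $\Lambda_*$, which induces a fibrewise norm $\|\cdot\|_q$ on $T_q^*\Lambda_*$, and use it to define a continuous size function
\[
\rho : J^1\Lambda_* \to \R_{\geq 0},\qquad \rho(q,p,z) := \|p\|_q^{\,2} + z^2.
\]
Two elementary properties are then key: $\rho^{-1}(0) = \Lambda_*$ (the zero-section), and the homogeneity
\[
\rho\circ\varphi_X^{-t} = e^{-2t}\rho.
\]

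Now let $K\subset \pi^{-1}(D')$ be a compact set containing $B$, and set $K_0 := \pi(K)\subset D'$, which is compact in $\Lambda_*$. Since $\rho$ is continuous, it attains a finite maximum $M$ on $K$. The open set $B'\subset J^1\Lambda_*$ contains $D'$, hence also $K_0$ viewed as a subset of the zero-section; the main subtlety of the proof is to extract from this containment a uniform tubular neighbourhood. Concretely, for each $q\in K_0$ the point $(q,0,0)\in B'$, so by continuity of $\rho$ and openness of $B'$ one finds an open neighbourhood $U_q\subset \Lambda_*$ of $q$ and $\delta_q>0$ with
\[
\bigl\{y\in\pi^{-1}(U_q)\ :\ \rho(y)<\delta_q\bigr\}\subset B'.
\]
A finite subcover of $K_0$ by such $U_q$'s and the corresponding minimum $\delta := \min_i \delta_{q_i} > 0$ produce a uniform tube $\{y\in\pi^{-1}(K_0) : \rho(y)<\delta\}\subset B'$.

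Finally, set $T := \tfrac{1}{2}\max\!\bigl(0,\log(M/\delta)\bigr)$. For any $y\in B\subset K$ and any $t\geq T$, one has $\pi(\varphi_X^{-t}(y)) = \pi(y)\in K_0$ and
\[
\rho(\varphi_X^{-t}(y)) = e^{-2t}\rho(y) \leq e^{-2t}M < \delta,
\]
so $\varphi_X^{-t}(y)$ lies in the tube, hence in $B'$, which is exactly what is required. The only step needing any real care is producing the uniform $\delta$; everything else reduces to the explicit computation of $\varphi_X^t$ and the homogeneity of $\rho$.
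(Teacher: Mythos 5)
The paper does not actually supply a proof of this lemma (it is explicitly ``left to the reader''), so there is nothing to compare against; your argument is the natural and intended one, and it is correct: the flow is $\varphi_X^t(q,p,z)=(q,e^tp,e^tz)$, it preserves fibres, and the compactness argument producing the uniform tube $\{y\in\pi^{-1}(K_0):\rho(y)<\delta\}\subset B'$ is exactly the point that needs care. One cosmetic slip: with $T=\tfrac12\max(0,\log(M/\delta))$ and $\rho(y)=M$ you get $\rho(\varphi_X^{-T}(y))=e^{-2T}M=\delta$, not $<\delta$, so the final strict inequality can fail at $t=T$; take any $T>\tfrac12\log(M/\delta)$ (or shrink $\delta$ beforehand) and the proof is complete.
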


We let the proof of Lemma \ref{lem:Liouville} to the reader and prove Proposition \ref{prop:contactflexibility}.

\begin{proof}[Proof of Proposition \ref{prop:contactflexibility}]
Since the sets $D$ and $D'$ are two open balls of $\Lambda_*$ there exists a smooth isotopy $(\varphi_t)\subset \mathrm{Diff}(\Lambda_*)$ such that $\varphi_0=\id$ and $\varphi_1(D)\subset D'$. This isotopy lifts to the contact isotopy $(\psi_t)\subset\cont_0(J^1\Lambda_*,\xi_{\mathrm{can}})$ defined by $\psi_t(q,p,z)=(\varphi_t(q),p\circ(\ud_q \varphi_t)^{-1},z)$. Therefore, $\psi_1(B)\subset \pi^{-1}(D')$. Thus, thanks to Lemma \ref{lem:Liouville}, there exists $T>0$ big enough so that $\varphi_X^{-T}(\psi_1(B))\subset B'$. Consequently, the contact isotopy $(\varphi_X^{-tT}\circ\psi_t)$ preserves $\Lambda_*$ and its time-one sends $B$ inside $B'$. By multiplying the $\alpha_\mathrm{can}$-contact Hamiltonian of $(\varphi_X^{-tT}\circ\psi_t)$ by a compactly supported function $\rho : J^1\Lambda_*\to \R$ which is equal to $1$ on an open neighborhood of $\Lambda_*\bigcup (\underset{t\in[0,1]}\bigcup\varphi_X^{-tT}\circ\psi_t(B))$, we get a new $\alpha_{\mathrm{can}}$-contact Hamiltonian generating a contact isotopy $(\eta_t)$ which has the desired properties\end{proof}

Proposition \ref{prop:weinstein} is well known in the folklore. As we are not aware of any written proof in the literature, we provide one below.\\ 

 For any Riemaniann metric $g$ on $\Lambda_*$, we denote by $D_gT^*\Lambda_*:=\{(x,u)\in T^*\Lambda_*\ |\ g_*(u,u)<1\}$ the co-disc bundle, where $g_*$ is the dual metric on $T^*\Lambda_*$, and,  for any $\varepsilon>0$, we denote by $U_g^\varepsilon:=D_gT^*\Lambda_*\times(-\varepsilon,\varepsilon)\subset J^1\Lambda_*$.
 
\begin{lem}\label{prop:star-shaped}
   For any $\varepsilon>0$ and any Riemaniann metric $g$ on $\Lambda_*$ there exists a contactomorphism $\Phi : (U_g^\varepsilon,\xi_{\mathrm{can}}|_{U_g^\varepsilon}) \to (J^1\Lambda_*,\xi_{\mathrm{can}})$ that restricts to the identity on the zero-section.  
\end{lem}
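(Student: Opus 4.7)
My plan is to construct $\Phi$ as the time-$1$ flow of a well-chosen contact Hamiltonian on $J^1\Lambda_*$. It is convenient to work first with the open ellipsoid neighborhood $V := \{(q,p,z)\in J^1\Lambda_* \mid r(q,p,z) < 1\}$, where $r(q,p,z) := g_*(p,p) + z^2/\varepsilon^2$. Since $r$ is $2$-homogeneous under the Liouville flow ($r \circ \varphi_X^t = e^{2t}\,r$), both $V$ and $U_g^\varepsilon$ are Liouville-star-shaped neighborhoods of the zero section. I will first produce a contactomorphism $\Phi_0 : V \to J^1\Lambda_*$ fixing the zero section; a parallel construction gives a contactomorphism $U_g^\varepsilon \to V$ fixing the zero section, and composing yields the desired $\Phi$.

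The key idea is to use the contact Hamiltonian $K(q,p,z) := z \cdot h(r(q,p,z))$ for a smooth strictly positive function $h : [0,\infty) \to (0,\infty)$ to be chosen. Since $K$ vanishes on the zero section and the standard formula for the contact vector field of a Hamiltonian shows that every component of $Y_K$ carries an explicit factor of $p$ or $z$, we have $Y_K|_{\Lambda_*} \equiv 0$. Therefore $\varphi_{Y_K}^t$ is a contactomorphism of $J^1\Lambda_*$ fixing the zero section pointwise for every $t$ for which it is defined.

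The crucial observation is that the radial function $r$ evolves autonomously along the flow of $Y_K$. In a direct coordinate computation, the terms containing $\partial_{q_k} g_*^{ij}$ that arise respectively from $\partial_{q_k} r \cdot Y_K^{q_k}$ and from $\partial_{p_k} r \cdot Y_K^{p_k}$ are exact negatives of one another (by the symmetry $g_*^{ij}=g_*^{ji}$ and a relabelling of summation indices), so they cancel and one obtains
\[
\dot r = 2\, r\, h(r)
\]
along the flow. I then choose $h$ so that $\int_0^1 \frac{ds}{2sh(s)} = +\infty$ and $\int_1^{\infty} \frac{ds}{2sh(s)} = 1$, for instance any smooth $h$ with $h(s) = s/2$ for $s$ large and $h\geq c>0$ on $[0,1]$. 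These conditions guarantee that for each $r_0 \in (0,1)$ the solution with $r(0)=r_0$ stays in $(0,\infty)$ for $t\in[0,1]$, and that the assignment $r_0 \mapsto r(1;r_0)$ is a monotone bijection $(0,1) \to (0,\infty)$. Combined with the fact that $\varphi_{Y_K}^1$ is a diffeomorphism of $J^1\Lambda_*$, this shows that $\Phi_0 := \varphi_{Y_K}^1 |_V$ maps $V$ bijectively onto $J^1\Lambda_*$ and is thus a contactomorphism fixing the zero section.

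The main obstacle I expect is the autonomous ODE identity $\dot r = 2rh(r)$: namely, carefully verifying in coordinates that the base-dependent contributions to $\dot r$ cancel in the presence of the non-trivial metric $g_*$ on $\Lambda_*$. Once this is in hand, the construction of the auxiliary contactomorphism $U_g^\varepsilon \to V$ fixing the zero section follows by a closely analogous radial-Hamiltonian argument, adapted to a smooth $2$-homogeneous defining function for the cylinder in place of the ellipsoid.
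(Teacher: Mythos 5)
Your first step is correct and is a genuinely different argument from the paper's. The computation behind $\dot r = 2rh(r)$ checks out: writing $r=g_*^{ij}(q)p_ip_j+z^2/\varepsilon^2$ and using $\dot q_k=-\partial_{p_k}K$, $\dot p_k=\partial_{q_k}K+p_k\partial_zK$, $\dot z=K-p_k\partial_{p_k}K$, the two terms involving $\partial_{q_k}g_*^{ij}$ cancel exactly as you claim, the remaining terms assemble to $2h(r)\bigl(g_*(p,p)+z^2/\varepsilon^2\bigr)$, and $Y_K$ does vanish on the zero section. Since $\Lambda_*$ is closed, $\{r\le R\}$ is compact, so boundedness of $r$ along a trajectory gives existence of the flow on $[0,1]$, and your integral conditions on $h$ make $r_0\mapsto r(1;r_0)$ a bijection $(0,1)\to(0,\infty)$; hence $\varphi_{Y_K}^1|_V:V\to J^1\Lambda_*$ is a contactomorphism fixing the zero section. (Minor imprecision: $\varphi_{Y_K}^1$ is not a diffeomorphism of all of $J^1\Lambda_*$ — points with $r\ge 1$ escape to infinity before time $1$ — but the restriction to $V$ is all you use.) By contrast, the paper follows Chekanov--van Koert--Schlenk: it exhausts $J^1\Lambda_*$ by the Liouville images $U_t=\varphi_X^t(U_g^\varepsilon)$, builds contactomorphisms between consecutive shells by cutting off the Liouville Hamiltonian $H_X=z$, and glues infinitely many of them. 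Your flow is more explicit, but the paper's shell argument needs nothing about the domain beyond $\overline{U_a}\subset U_b$.

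That robustness is exactly what your second step is missing, and this is a genuine gap. The lemma is about the cylinder $U_g^\varepsilon=D_gT^*\Lambda_*\times(-\varepsilon,\varepsilon)$, whose boundary has a corner along $\{g_*(p,p)=1,\ z=\pm\varepsilon\}$. There is \emph{no} smooth $2$-homogeneous defining function for it: if $\rho$ is smooth away from the zero section, positive, and satisfies $\rho\circ\varphi_X^t=e^{2t}\rho$, then Euler's identity gives $\ud\rho(X)=2\rho\ne 0$ on $\{\rho=1\}$, so $\{\rho=1\}$ is a smooth hypersurface — which $\partial U_g^\varepsilon$ is not. So the ``closely analogous radial-Hamiltonian argument'' for $U_g^\varepsilon\to V$ cannot be run as described, and the Hamiltonian $z\,h(\rho)$ you would need is not smooth at the corner rays. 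The fix is either to fall back on the cutoff-and-glue argument of the paper for this step (it handles non-smooth star-shaped boundaries without change), or to observe that for the paper's only use of the lemma (Proposition \ref{prop:weinstein}) one may simply replace $U_g^\varepsilon$ by the smooth ellipsoid $V\subset U_g^\varepsilon$, in which case your first step already suffices. As a proof of the lemma as stated, however, the second step needs to be repaired.
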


 For the sake of completeness we prove Lemma  \ref{prop:star-shaped} by repeating almost verbatim the proof of Proposition 3.1 in \cite{Chekanov_vanKoert_Schlenk2008}. 

\begin{proof}   Putting $U_t:=\varphi_X^t(U_g^\varepsilon)$, then $U_{t_1}\subset U_{t_2}$, whenever $t_1\leq t_2$, and $\underset{t\in \R}\bigcup U_t=J^1\Lambda_*$. 

\textbf{First step:} Let us prove that given any $a<b$ and any $c<d$, there exists $\Phi_{a,b}^{c,d}\in\cont_0(J^1\Lambda_*,\xi_{\mathrm{can}})$ that coincides with $\varphi_{X}^{c-a}$ on a neighborhood of $\partial U_a$ and with $\varphi_{X}^{d-b}$ on a neighborhood of $\partial U_b$. Since $J^1\Lambda_*\setminus \partial U_t$ has exactly two connected components for any $t\in\R$, such a contactomorphism would send $ U_a$ to $U_c$ and send $U_b$ to $U_d$. To construct $\Phi_{a,b}^{c,d}$, let $r_1,r_2$ be such that $a<r_1<r_2<\min(b,d-(c-a))$. Let $F :J^1\Lambda_*\to\R$ be a $\alpha_{\mathrm{can}}$-Hamiltonian such $F=0$ on $U_{r_1}$ and $F=H_X$ outside $U_{r_2}$, where $H_X(q,p,z)=z$ is the $\alpha_{\mathrm{can}}$-contact Hamiltonian of $X$. The contact flow $(\varphi_F^t)_{t\in\R}$ generated by $F$ is such that $\varphi_F^{(d-b)-(c-a)}$ coincides with the identity in a neighborhood of $\partial U_a$ and coincides with $\varphi_X^{(d-b)-(c-a)}$ on a neighborhood of $\partial U_b$, therefore one can pose $\Phi_{a,b}^{c,d}:=\varphi_X^{c-a}\circ\varphi_F^{(d-b)-(c-a)}$.

\textbf{Second step:} Since $U_g^\varepsilon=\underset{n\in\N_>0}\bigcup U_{\frac{-1}{n}}$ and $J^1\Lambda=\underset{n\in\N_{>0}}\bigcup U_n$ we construct the desired contactomorphism $\Phi$ as follow
\[\Phi : U\to J^1\Lambda\quad \quad \quad x\mapsto \begin{cases}
    \varphi_X^{2}(x) & \text{if } x\in U_{-1}\\
    \Phi_{\frac{-1}{n},\frac{-1}{n+1}}^{n,n+1}(x) & \text{if } x\in U_{\frac{-1}{n+1}}\setminus U_{\frac{-1}{n}}.  \qedhere
\end{cases}  \]
\end{proof}

\begin{proof}[Proof of Proposition \ref{prop:weinstein}] By the standard contact Weinstein neighborhood Theorem (Corollary 4.1 \cite{Lychagin1977}) there exists a neighborhood $\mathcal{V}\subset M$ of the Legendrian $\Lambda_*\subset (M,\xi)$, a neighborhood $V\subset (J^1\Lambda_*,\xi_{\mathrm{can}})$ of the zero-section and $\Psi : (\mathcal{V},\xi|_{\mathcal{V}})\to (V,\xi_{\mathrm{can}})$ a contactomorphism sending $\Lambda_*$ to the zero-section. There exists a Riemannian metric $g$ on $\Lambda_*$ and $\varepsilon>0$ such that $U_g^\varepsilon$ is contained in $V$. So by Lemma \ref{prop:star-shaped}, the neighborhood $\mathcal{U}:=\Psi^{-1}(U_g^\varepsilon)\subset (M,\xi)$ of $\Lambda_*$ is contactomorphic to $(J^1\Lambda_*,\xi_{\mathrm{can}})$ by a contactomorphism, i.e. $\Phi\circ\Psi$, sending $\Lambda_*$ to the zero-section. 
\end{proof}

\section{Proof of Theorem \ref{thm}}\label{se:proof}
In Section \ref{se:proof} $\Lambda_*$ is a closed and connected Legendrian submanifold of a cooriented contact manifold $(M,\xi=\ker\alpha)$ such that $\uLeg(\Lambda_*)$, $\uLeg$ for simplicity, is orderable and such that there exists a loop of Legendrians $(\Lambda_*^t)_{t\in S^1}$, based at $\Lambda_*$, which is positive and that can be extended to a loop of contactomorphisms $(\varphi_t)_{t\in S^1}\subset\cont(M,\xi)$ based at the identity. We denote by $\tLambda_*\in\uLeg$ the constant isotopy equivalence class and for any $T\in\R$  we denote by $\tphi_T:=[(\varphi_{tT})_{t\in[0,1]}]\in\widetilde{\conto}(M,\xi)$ and $\tLambda_*^T:=\tphi_T\cdot\tLambda_*$. We define the following maps $\ell_\pm : \uLeg\times\uLeg \to \Z\cup\{+\infty\}\cup\{-\infty\}$
\[\ell_+(\tLambda_1,\tLambda_0):=\inf\{N\in\Z\ |\ \tLambda_1\cleq\tphi_N\cdot\tLambda_0\}\text{ and } \ell_-(\tLambda_1,\tLambda_0):=\sup\{N\in\Z\ |\ \tphi_N\cdot\tLambda_0\cleq \tLambda_1\}\]
where by definition the $\inf\emptyset=+\infty$ and $\sup\emptyset=-\infty$. Theorem \ref{thm} is a direct consequence of the following. 
\begin{thm}\label{thm2}
    The maps $\ell_\pm$ take value in $\Z$ and the map $\ud : \uLeg\times\uLeg\to\N_{\geq 0}$ 
    \[(\tLambda_0,\tLambda_1)\mapsto \ud(\tLambda_0,\tLambda_1):=
       \max\{\ell_+(\tLambda_1,\tLambda_0),-\ell_-(\tLambda_1,\tLambda_0)\} \] is an invariant unbounded distance which is compatible with the order, i.e. $x\cleq y\cleq z\Rightarrow \ud(x,y)\leq \ud(x,z)$. 
\end{thm}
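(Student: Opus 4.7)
The plan is to verify the distance axioms one at a time, with the core step being finiteness of $\ell_\pm$, and to exploit the observation that $\tphi_N$ is central in $\widetilde{\conto}$. The latter is the standard fact that for any connected topological group $G$ the subgroup $\pi_1(G,e)\subset\widetilde G$ lies in the center of $\widetilde G$: the continuous map $\tpsi\mapsto\tpsi\tphi_N\tpsi^{-1}$ from the connected group $\widetilde G$ to the discrete normal subgroup $\pi_1(G,e)$ is forced to be constant. Applied to $G=\conto(M,\xi)$ and to $\tphi_N\in\pi_1(\conto,\id)$ (since $\varphi_N=\id$), this yields $\tpsi\tphi_N=\tphi_N\tpsi$ for every $\tpsi\in\widetilde{\conto}$. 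Invariance of $\ud$ then follows at once:
\[\ell_+(\tpsi\cdot\tLambda_1,\tpsi\cdot\tLambda_0)=\inf\bigl\{N:\tLambda_1\cleq\tpsi^{-1}\tphi_N\tpsi\cdot\tLambda_0\bigr\}=\ell_+(\tLambda_1,\tLambda_0),\]
and analogously for $\ell_-$.

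For finiteness of $\ell_+$, by invariance it suffices to show $\tLambda\cleq\tphi_N\cdot\tLambda_*$ for $N$ large and any $\tLambda\in\uLeg$. Writing $\tLambda=\widetilde{\chi}\cdot\tLambda_*$ with $\widetilde{\chi}\in\widetilde{\conto}$ represented by a contact isotopy $(\chi_t)_{t\in[0,1]}$, $\chi_0=\id$, centrality rewrites this as $\tLambda_*\cleq\widetilde{\chi}^{-1}\tphi_N\cdot\tLambda_*$, and the product $\widetilde{\chi}^{-1}\tphi_N$ admits the representative $t\mapsto\chi_t^{-1}\varphi_{Nt}$. So it suffices to show the Legendrian path $\gamma_N(t):=\chi_t^{-1}(\varphi_{Nt}(\Lambda_*))$ is positive for $N$ large. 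The contact composition formula yields that the $\alpha$-Hamiltonian of the contact isotopy $\chi_t^{-1}\varphi_{Nt}$ evaluated at the point $y=\chi_t^{-1}(\varphi_{Nt}(p))$ of the Legendrian has the form
\[\bar K_t(y)+N\,\lambda_t(\varphi_{Nt}(p))\cdot H_{Nt}(\varphi_{Nt}(p)),\]
where $\bar K_t$ is the contact Hamiltonian of $(\chi_t^{-1})$, $\lambda_t>0$ is its conformal factor, and $H$ is the $\alpha$-Hamiltonian of the loop $(\varphi_t)$. The crux is that $\varphi_{Nt}(p)\in\varphi_{Nt}(\Lambda_*)=\Lambda_*^{Nt}$, so by the positivity of the Legendrian loop and $S^1$-periodicity one gets $H_{Nt}(\varphi_{Nt}(p))\geq\epsilon>0$. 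Since the images of $(t,p,N)\mapsto(y,\varphi_{Nt}(p))$ lie in a compact set independent of $N$ (again by periodicity), $\lambda_t$ admits a uniform lower bound $\lambda_{\min}>0$ and $|\bar K_t|$ a uniform upper bound $C$ on that set. Hence the Hamiltonian of $\gamma_N$ is at least $N\lambda_{\min}\epsilon-C>0$ for $N$ large, so $\gamma_N$ is positive. A symmetric argument (or time reversal) gives $\ell_-(\tLambda_1,\tLambda_0)>-\infty$; orderability of $\uLeg$ provides the complementary bounds, since $\tphi_M\cdot\tLambda_0\cleq\tphi_N\cdot\tLambda_0$ forces $\tLambda_0\cleq\tphi_{N-M}\cdot\tLambda_0$ and hence $N\geq M$, so in particular $\ell_-\leq\ell_+$.

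The remaining axioms are formal. Positive definiteness: $\ud(\tLambda_0,\tLambda_1)=0$ forces $\ell_+=\ell_-=0$ (using $\ell_-\leq\ell_+$), hence $\tLambda_0\cleq\tLambda_1\cleq\tLambda_0$, so antisymmetry gives $\tLambda_0=\tLambda_1$. Symmetry follows from the identity $\ell_+(\tLambda_0,\tLambda_1)=-\ell_-(\tLambda_1,\tLambda_0)$, obtained by acting by $\tphi_N$ and reindexing. The triangle inequality follows from sub-additivity of $\ell_+$ (concatenate positive paths) and super-additivity of $\ell_-$, combined with the elementary $\max(a_1+a_2,b_1+b_2)\leq\max(a_1,b_1)+\max(a_2,b_2)$. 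Unboundedness is witnessed by the direct calculation $\ud(\tLambda_*,\tphi_n\cdot\tLambda_*)=|n|$. Order compatibility: if $x\cleq y\cleq z$, then $\ell_\pm(y,x)\geq 0$ and $\ell_\pm(z,x)\geq 0$ (the $\ell_+$ sign uses orderability, as $y\cleq\tphi_{-1}x$ would combine with $x\cleq y$ to yield $\tphi_1x\cleq x$), so $\ud(x,y)=\ell_+(y,x)$ and $\ud(x,z)=\ell_+(z,x)$; since $z\cleq\tphi_N\cdot x$ implies $y\cleq\tphi_N\cdot x$, we have $\ell_+(y,x)\leq\ell_+(z,x)$, giving $\ud(x,y)\leq\ud(x,z)$.

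The main obstacle is the finiteness step. The choice of composition order is essential: with $\chi_t^{-1}$ on the outside and $\varphi_{Nt}$ on the inside, the dominant $H$-term is forced to be evaluated at $\varphi_{Nt}(p)\in\Lambda_*^{Nt}$, where $H$ is positive by hypothesis; the opposite order, $\varphi_{Nt}\chi_t^{-1}$, would evaluate $H$ at points off the orbit of $\Lambda_*$, where the sign of $H$ is undetermined. This is precisely where the weaker hypothesis of the theorem -- positivity only of the Legendrian orbit of $\Lambda_*$ rather than of the full loop of contactomorphisms -- gets exploited, and what distinguishes the present construction from those based on positive loops of contactomorphisms.
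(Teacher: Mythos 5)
Your proof is correct, and most of its architecture coincides with the paper's: invariance of $\ell_\pm$ via centrality of $\tphi_N\in\pi_1(\conto)$ in $\widetilde{\conto}$, the identity $\ell_+(\tLambda_1,\tLambda_0)=-\ell_-(\tLambda_0,\tLambda_1)$ for symmetry, sub/super-additivity of $\ell_\pm$ for the triangle inequality, antisymmetry of $\cleq$ for non-degeneracy, and the $\tphi_n$-orbit of $\tLambda_*$ as witness of unboundedness (Propositions \ref{prop1}--\ref{prop4}). The genuinely different step is finiteness of $\ell_\pm$: the paper reduces, by the same centrality trick, to the real-valued selectors $\ell_\pm^{\underline{\tLambda_*}}$ attached to the positive Legendrian loop and quotes their finiteness from Proposition 3.1 of \cite{allais2023spectral} as a black box, whereas you prove the needed domination directly by estimating the $\alpha$-contact Hamiltonian of $t\mapsto\chi_t^{-1}\circ\varphi_{Nt}$ along the image of $\Lambda_*$ and showing the induced Legendrian path is positive once $N\lambda_{\min}\epsilon>C$. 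The estimate is sound, and your point about the composition order is exactly right: it forces $H$ to be evaluated only on $\bigcup_t\{t\}\times\Lambda_*^t$, where positivity of the Legendrian loop (not of the contactomorphism loop) supplies the uniform lower bound. Your route buys a self-contained argument that makes visible where the weakened hypothesis of Theorem \ref{thm} enters; the paper's buys brevity and a real-parameter statement reused in Section \ref{se:spectral distance}. Two cosmetic remarks: in the order-compatibility step, $\ell_-(y,x)\geq 0$ follows directly from $x\cleq y$ by taking $N=0$, and then $\ell_+(y,x)\geq\ell_-(y,x)\geq 0$; and for unboundedness the one-sided bound $\ud(\tLambda_*,\tphi_n\cdot\tLambda_*)\geq n$ suffices, the exact value $|n|$ needing the extra observation (which your orderability argument does provide) that $\tLambda_*\not\cleq\tphi_k\cdot\tLambda_*$ for $k<0$.
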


The next Propositions of this section aim to prove Theorem \ref{thm2}.

\begin{prop}\label{prop1}
$\ell_\pm(\tLambda_1,\tLambda_0)\in\Z$ and $\ell_-(\tLambda_1,\tLambda_0)\leq \ell_+(\tLambda_1,\tLambda_0)$ for any $\tLambda_1,\tLambda_0\in\uLeg$.
\end{prop}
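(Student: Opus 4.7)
My plan is to reduce the Proposition to a single \emph{absorption lemma}: for all $\tLambda_0,\tLambda_1 \in \uLeg$ there exists $N_0 \in \N$ such that $\tLambda_1 \cleq \tphi_N \cdot \tLambda_0$ for every integer $N \geq N_0$. Two algebraic facts drive the whole argument: first, for integer $N$, the element $\tphi_N=[(\varphi_{tN})_{t\in[0,1]}]$ lies in $\pi_1(\conto,\id) \subset \widetilde{\conto}$, hence is central (the fundamental group of a path-connected topological group is central in its universal cover); second, $\tphi_M\tphi_N = \tphi_{M+N}$ for $M,N\in\Z$. Combined with the $\widetilde{\conto}$-invariance of $\cleq$, these facts will let me systematically reduce every statement about arbitrary $\tLambda_0,\tLambda_1$ to a corresponding statement about the basepoint $\tLambda_*$.

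\textbf{Proof of the lemma.} Write $\tLambda_i = \widetilde g_i \cdot \tLambda_*$ for $i=0,1$, set $\widetilde g := \widetilde g_1^{-1}\widetilde g_0$, and pick a representative contact isotopy $(g_t)_{t\in[0,1]}$ of $\widetilde g$ with $\alpha$-Hamiltonian $G$. Applying $\widetilde g_1^{-1}$ and using centrality of $\tphi_N$, the inequality $\tLambda_1 \cleq \tphi_N\cdot\tLambda_0$ is equivalent to $\tLambda_* \cleq (\widetilde g\,\tphi_N)\cdot\tLambda_*$, which I realize by the path $\chi_t := \widetilde g_t\,\tphi_{tN}$ in $\widetilde{\conto}$: it runs from $\id$ to $\widetilde g\,\tphi_N$, and its action on $\tLambda_*$ is the Legendrian isotopy $\gamma_t(x) := g_t(\varphi_{tN}(x))$, $x\in\Lambda_*$. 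The contact-isotopy chain rule yields
\[\alpha(\dot\gamma_t(x)) = G(t,\gamma_t(x)) + N\, f_{g_t}(\varphi_{tN}(x))\, H(tN,\varphi_{tN}(x)),\]
where $H$ is the $\alpha$-Hamiltonian of $(\varphi_t)$ and $f_{g_t}$ its conformal factor ($g_t^*\alpha = f_{g_t}\alpha$). The decisive point is that $\varphi_{tN}(x)\in\Lambda_*^{tN}$: positivity of the Legendrian loop combined with compactness of $\bigcup_{s\in S^1}\Lambda_*^s$ gives a uniform lower bound $H(s,y)\geq h>0$ on $\{(s,y):y\in\Lambda_*^s\}$; similarly $f_{g_t}\geq c>0$ and $|G|\leq C_G$ on the relevant $N$-independent compact sets. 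Hence $\alpha(\dot\gamma_t) \geq Nch - C_G \geq 0$ as soon as $N \geq C_G/(ch)$.

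\textbf{From the lemma to the Proposition.} The lemma gives $\ell_+(\tLambda_1,\tLambda_0) < +\infty$ immediately, and swapping $\tLambda_0 \leftrightarrow \tLambda_1$ followed by central translation by $\tphi_{-M}$ yields $\tphi_{-M}\cdot\tLambda_0 \cleq \tLambda_1$ for $M$ large, so $\ell_- > -\infty$. For $\ell_-\leq\ell_+$, suppose for contradiction that integers $N>M$ satisfy $\tphi_N\cdot\tLambda_0 \cleq \tLambda_1 \cleq \tphi_M\cdot\tLambda_0$; transitivity and invariance under $\tphi_{-M}$ give $\tphi_K\cdot\tLambda_0 \cleq \tLambda_0$ with $K:=N-M>0$, and iterating via invariance under $\tphi_K$ yields $\tphi_{nK}\cdot\tLambda_0 \cleq \tLambda_0$ for all $n\geq 1$. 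Taking $n$ large enough that $nK$ exceeds the absorption threshold, the lemma also gives $\tLambda_0 \cleq \tphi_{nK}\cdot\tLambda_0$, so antisymmetry of $\cleq$ on $\uLeg$ forces $\tphi_{nK}\cdot\tLambda_0 = \tLambda_0$; by centrality this is equivalent to $\tphi_{nK}\cdot\tLambda_* = \tLambda_*$, but the path $s\mapsto \tphi_{snK}\cdot\tLambda_*$ represents the $nK$-iterate of the positive Legendrian loop, giving a positive loop in $\uLeg$ — contradicting orderability. The remaining bounds $\ell_+>-\infty$ and $\ell_-<+\infty$ then follow from $\ell_-\leq\ell_+$.

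\textbf{Main obstacle.} The Hamiltonian estimate in the lemma is the only real difficulty, precisely because $(\varphi_t)$ is \emph{not} assumed positive as a loop of contactomorphisms: $H$ can take negative values away from $\bigcup_s\Lambda_*^s$, and the naive path $t\mapsto \tphi_{tN}\cdot\tLambda_0$ evaluates $H$ on $\varphi_{tN}(\Lambda_0)$, which is uncontrolled. The factorization $\chi_t = \widetilde g_t\,\tphi_{tN}$ is designed so that $\varphi_{tN}$ acts only on $\Lambda_*$ — the one region where $H$ is uniformly positive — while centrality of $\tphi_N$ is exactly what permits moving the loop past $\widetilde g_t$ to recover $\tphi_N\cdot\tLambda_0$ as the endpoint.
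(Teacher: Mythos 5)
Your proof is correct. The algebraic skeleton coincides with the paper's: transport everything to the basepoint using transitivity of the $\widetilde{\conto}$-action, use that $\tphi_N\in\pi_1(\conto,\id)$ is central to commute the loop past the transporting element, and invoke antisymmetry (equivalently, absence of positive loops in $\uLeg$) to exclude $\ell_->\ell_+$. The genuine difference lies in the key finiteness input. The paper simply cites \cite[Proposition 3.1]{allais2023spectral} for the fact that every $\tLambda$ is squeezed between $\tphi_{-N}\cdot\tLambda_*$ and $\tphi_{N}\cdot\tLambda_*$ for $N$ large, and then transfers this to arbitrary $\tLambda_0$ by centrality; you instead reprove this absorption from scratch via the composed path $\widetilde{g}_t\,\tphi_{tN}$, and your formula $\alpha(\dot\gamma_t)=G(t,\gamma_t)+N f_{g_t}H(tN,\cdot)$ together with the uniform bounds on the $N$-independent compact sets $[0,1]\times\bigcup_s\Lambda_*^s$ and its $g_t$-images is exactly right. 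This buys self-containedness and makes transparent the precise point of the hypothesis: only positivity of $H$ along $\bigcup_s\Lambda_*^s$ is needed, not positivity of the contact loop. Your derivation of $\ell_-\leq\ell_+$ by iterating $\tphi_K\cdot\tLambda_0\cleq\tLambda_0$ until the absorption threshold is reached and exhibiting a positive loop is slightly more roundabout than the paper's direct use of the monotonicity $\tphi_{N_1}\cdot\tLambda_0\cleq\tphi_{N_2}\cdot\tLambda_0$ for $N_1\leq N_2$ combined with antisymmetry, but both rest on the same orderability hypothesis and are equally valid.
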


\begin{proof}
First, recall from \cite[Proposition 3.1]{allais2023spectral} that for any $\tLambda\in\uLeg$ 
\begin{equation*}\label{eq:crucial}\ell_+^{\underline{\tLambda_*}}(\tLambda):=\inf\{T\in\R\ |\ \tLambda\cleq \tLambda_*^T\}>-\infty\ \text{ and }\ \ell_-^{\underline{\tLambda_*}}(\tLambda):\sup\{T\in\R\ |\ \tLambda_*^T\cleq\tLambda\}<+\infty.
   \end{equation*}


Now let $\tpsi\in\widetilde{\conto}$ be such that $\tpsi\cdot\tLambda_*=\tLambda_0$ and $T_0:=\max\{\ell_+^{\underline{\tLambda_*}}(\tpsi^{-1}\tLambda_1),-\ell_-^{\underline{\tLambda_*}}(\tpsi^{-1}\tLambda_1)\}$. Note that $T_0\in\R_{\geq 0}$ is non-negative since $\ell_-^{\underline{\tLambda_*}}\leq \ell_+^{\underline{\tLambda_*}}$ by transitivity and antisymmetry of $\cleq$. Denote by $N:=\lfloor T_0\rfloor+1$. Thus
\begin{equation}\label{eq}\tphi_{-N}\cdot\tLambda_*=\tLambda_*^{-N}\cleq \tpsi^{-1}\tLambda_1\cleq \tLambda_*^N=\tphi_{N}\cdot\tLambda_*.
   \end{equation}
  Therefore, using the invariance of $\cleq$ and the fact that $\tphi_N$ lies in the center of $\widetilde{\conto}$:
   \begin{equation}\label{eq2}
   \begin{aligned} \tphi_{-N}\cdot\tLambda_*\cleq \tpsi^{-1}\cdot\tLambda_1\cleq \tphi_N\cdot\tLambda_* &\Rightarrow \tpsi\cdot \tphi_{-N}\cdot \tLambda_*\cleq\tLambda_1\cleq \tpsi\cdot\tphi_N\cdot\tLambda_*\\
   &\Rightarrow\tphi_{-N}\cdot\tpsi\cdot\Lambda_*\cleq\tLambda_1\cleq\tphi_N\cdot\tpsi\cdot\tLambda_*\\
   &\Rightarrow \tphi_{-N}\cdot\tLambda_0\cleq\tLambda_1\cleq \tphi_{N}\cdot\tLambda_0.
   \end{aligned}\end{equation}
 From \eqref{eq2} and the antisymmetry of $\cleq$ we deduce that
   $\ell_+(\tLambda_1,\tLambda_0)\geq-N$ and $\ell_-(\tLambda_1,\tLambda_0)\leq N$ which proves the finiteness of $\ell_\pm(\tLambda_1,\tLambda_0)$. Finally, we claim that $
\tphi_{N_1}\cdot\tLambda_0\cleq \tphi_{N_2}\cdot\tLambda_0\quad \text{for any }N_1\leq N_2\in\Z.$ From this claim, we deduce directly the desired inequality $\ell_-(\tLambda_1,\tLambda_0)\leq\ell_+(\tLambda_1,\tLambda_0)$ using the transitivity and antisymmetry of $\cleq$. To prove the claim, note that $\tphi_{N_1}\cdot\tLambda_*=\tLambda_*^{N_1}\cleq \tLambda_*^{N_2}=\tphi_{N_2}\cdot\tLambda_*$ for any $N_1\leq N_2\in \Z$. Thus, $\tpsi\cdot\tphi_{N_1}\cdot\tLambda_*\cleq \tpsi\cdot\tphi_{N_1}\cdot\tLambda_*$ by invariance of $\cleq$. Again, $\tphi_{N}$ being central in $\widetilde{\conto}$, when $N\in\Z$, allows to conclude the proof.\end{proof}

\begin{prop}\label{prop2}
    
    $\ell_+(\tLambda_1,\tLambda_0)=-\ell_-(\tLambda_0,\tLambda_1)$ for any $\tLambda_0,\tLambda_1\in\uLeg$.
\end{prop}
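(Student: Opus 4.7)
The plan is to reduce the claim to a single substitution in the indexing set, using the $\widetilde{\conto}$-invariance of the relation $\cleq$ recalled in Section \ref{se:partial order}.

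First I would unpack the two quantities. By definition,
\[
\ell_+(\tLambda_1,\tLambda_0)=\inf\{N\in\Z\mid \tLambda_1\cleq\tphi_N\cdot\tLambda_0\},\qquad
\ell_-(\tLambda_0,\tLambda_1)=\sup\{N\in\Z\mid \tphi_N\cdot\tLambda_1\cleq\tLambda_0\}.
\]
The key observation is that, since $\cleq$ is $\widetilde{\conto}$-invariant, for every integer $N$ we have the equivalence
\[
\tLambda_1\cleq\tphi_N\cdot\tLambda_0 \iff \tphi_{-N}\cdot\tLambda_1\cleq\tphi_{-N}\cdot\tphi_N\cdot\tLambda_0=\tLambda_0,
\]
where the right-hand side is obtained by acting on both sides by $\tphi_{-N}\in\widetilde{\conto}$ (and conversely by $\tphi_N$). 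Proposition \ref{prop1} guarantees that both infima/suprema are taken over non-empty subsets of $\Z$, so no infinities arise.

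Second I would perform the index substitution $M=-N$. The equivalence above rewrites the defining set of $\ell_+(\tLambda_1,\tLambda_0)$ as $\{N\in\Z\mid \tphi_{-N}\cdot\tLambda_1\cleq\tLambda_0\}$, which equals $\{-M\mid M\in\Z,\ \tphi_M\cdot\tLambda_1\cleq\tLambda_0\}$. Taking the infimum and using $\inf(-S)=-\sup S$ for $S\subset\Z$ gives
\[
\ell_+(\tLambda_1,\tLambda_0)=\inf\{-M\mid \tphi_M\cdot\tLambda_1\cleq\tLambda_0\}=-\sup\{M\mid \tphi_M\cdot\tLambda_1\cleq\tLambda_0\}=-\ell_-(\tLambda_0,\tLambda_1),
\]
which concludes the proof. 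I do not anticipate any substantive obstacle here, as the statement is essentially formal once one has the $\widetilde{\conto}$-invariance of $\cleq$ and the finiteness established in Proposition \ref{prop1}; the only thing to check carefully is that acting by $\tphi_{-N}$ produces a genuine equivalence (not just an implication), which is immediate since $\tphi_N$ has the two-sided inverse $\tphi_{-N}$ in $\widetilde{\conto}$.
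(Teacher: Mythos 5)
Your proof is correct and follows essentially the same route as the paper: both arguments rewrite $\tLambda_1\cleq\tphi_N\cdot\tLambda_0$ as $\tphi_{-N}\cdot\tLambda_1\cleq\tLambda_0$ using the $\widetilde{\conto}$-invariance of $\cleq$, substitute $N\mapsto -N$, and use $\inf(-S)=-\sup S$. Your appeal to Proposition \ref{prop1} for finiteness is not even needed, since the identity also holds formally with the conventions $\inf\emptyset=+\infty$ and $\sup\emptyset=-\infty$.
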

\begin{proof}
\begin{equation*}
\begin{aligned}\ell_+(\tLambda_1,\tLambda_0)=\inf\{N\in \Z\ |\ \tLambda_0\cleq\tphi_N\cdot\tLambda_1\}&=\inf\{N\in\Z\ |\ \tphi_{-N}\cdot\tLambda_0\cleq\tLambda_1\}\\
    &=\inf\{-N\in\Z\ |\ \tphi_N\cdot\tLambda_0\cleq\tLambda_1\}\\
    &=-\sup\{N\in\Z\ |\ \tphi_N\cdot\tLambda_0\cleq\tLambda_1\}. \qedhere
    \end{aligned}
\end{equation*}
\end{proof}
\begin{prop}\label{prop3}
    For any $\tLambda_0,\tLambda_1,\tLambda_2\in\uLeg$ 
    \[\ell_+(\tLambda_0,\tLambda_2)\leq\ell_+(\tLambda_0,\tLambda_1)+\ell_+(\tLambda_1,\tLambda_2)\text{ and } \ell_-(\tLambda_0,\tLambda_2)\geq \ell_-(\tLambda_0,\tLambda_1)+\ell_-(\tLambda_1,\tLambda_2). \]
\end{prop}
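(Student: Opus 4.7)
The plan is to chain the defining order relations of $\ell_\pm$ using transitivity and invariance of $\cleq$ under $\widetilde{\conto}$, together with the additivity $\tphi_{N_1}\cdot\tphi_{N_2}=\tphi_{N_1+N_2}$ for $N_1,N_2\in\Z$. Proposition \ref{prop1} already guarantees that $\ell_\pm$ take finite integer values. The monotonicity claim established at the end of the proof of Proposition \ref{prop1}---that $\tphi_{N_1}\cdot\tLambda\cleq\tphi_{N_2}\cdot\tLambda$ whenever $N_1\leq N_2$ in $\Z$---shows that the set $\{N\in\Z\mid\tLambda\cleq\tphi_N\cdot\tLambda'\}$ is an upward-closed, bounded-below subset of $\Z$, so the infimum defining $\ell_+$ is attained as a minimum; symmetrically, the supremum defining $\ell_-$ is attained.

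For the first inequality, I will set $N_i:=\ell_+(\tLambda_{i-1},\tLambda_i)$ for $i=1,2$, which by the previous remark gives $\tLambda_0\cleq\tphi_{N_1}\cdot\tLambda_1$ and $\tLambda_1\cleq\tphi_{N_2}\cdot\tLambda_2$. Applying invariance of $\cleq$ under the action of $\tphi_{N_1}\in\widetilde{\conto}$ to the second relation yields $\tphi_{N_1}\cdot\tLambda_1\cleq\tphi_{N_1}\cdot\tphi_{N_2}\cdot\tLambda_2=\tphi_{N_1+N_2}\cdot\tLambda_2$, and transitivity then gives $\tLambda_0\cleq\tphi_{N_1+N_2}\cdot\tLambda_2$, so $\ell_+(\tLambda_0,\tLambda_2)\leq N_1+N_2$. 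The inequality for $\ell_-$ is perfectly analogous: setting $M_i:=\ell_-(\tLambda_{i-1},\tLambda_i)$, the relations $\tphi_{M_2}\cdot\tLambda_2\cleq\tLambda_1$ and $\tphi_{M_1}\cdot\tLambda_1\cleq\tLambda_0$ combine via $\tphi_{M_1}$-invariance and additivity to yield $\tphi_{M_1+M_2}\cdot\tLambda_2\cleq\tphi_{M_1}\cdot\tLambda_1\cleq\tLambda_0$, i.e.\ $\ell_-(\tLambda_0,\tLambda_2)\geq M_1+M_2$.

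The only substantive ingredient beyond the order axioms is the identity $\tphi_{N_1}\cdot\tphi_{N_2}=\tphi_{N_1+N_2}$ for integer $N_1,N_2$. Because $(\varphi_t)_{t\in S^1}$ is a loop based at $\id$, the element $\tphi_1$ sits in $\pi_1(\conto)\subset\widetilde{\conto}$, and a reparametrization homotopy identifies $\tphi_N$ with the $N$-fold loop concatenation of $\tphi_1$; in the topological group $\conto$, loop concatenation coincides on $\pi_1$ with the timewise product (Eckmann--Hilton), so $N\mapsto\tphi_N$ is a group homomorphism $\Z\to\widetilde{\conto}$. This fact is already used implicitly in the author's appeal to centrality of $\tphi_N$ in the proof of Proposition \ref{prop1}, so I do not anticipate any genuine obstacle: once the homomorphism property and the order manipulations are in place, the argument is essentially mechanical.
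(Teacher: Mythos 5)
Your proof is correct and follows essentially the same route as the paper's: realize each $\ell_\pm$ as an attained min/max, chain the two order relations using $\widetilde{\conto}$-invariance and transitivity of $\cleq$, and use $\tphi_{N_1}\cdot\tphi_{N_2}=\tphi_{N_1+N_2}$. Your explicit justification that the infimum/supremum is attained (via the monotonicity claim from Proposition \ref{prop1}) and of the homomorphism property of $N\mapsto\tphi_N$ are points the paper leaves implicit, but the argument is the same.
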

\begin{proof}
    Let $N_0=\ell_+(\tLambda_0,\tLambda_1)$ and $N_1=\ell_+(\tLambda_1,\tLambda_2)$ then 
    $\tLambda_0\cleq\tphi_{N_0}\cdot\tLambda_1\text{ and } \tLambda_1\cleq \tphi_{N_1}\cdot \tLambda_2$ which implies using transitivity and invariance of $\cleq$ that $\tLambda_0\cleq \tphi_{N_0+N_1}\cdot\Lambda_2$, hence the triangle inequality satisfied by $\ell_+$. The same reasoning yields the reverse triangle inequality satisfied by $\ell_-$. 
\end{proof}

\begin{prop}\label{prop4}
$\ell_\pm(\teta\cdot\tLambda_1,\teta\cdot\tLambda_0)=\ell_\pm(\tLambda_1,\tLambda_0)$ for all $\teta\in\widetilde{\conto}$ and all $\tLambda_1,\tLambda_0\in\uLeg$. 
\end{prop}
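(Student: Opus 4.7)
The proposition is a direct unwinding of the definitions, once we use two facts already present in the paper: the $\widetilde{\conto}$-invariance of $\cleq$ on $\uLeg$ recalled in Section~\ref{se:partial order}, and the centrality of $\tphi_N$ in $\widetilde{\conto}$ for every $N\in\Z$, which is the very fact invoked at the end of the proof of Proposition~\ref{prop1}. Centrality holds because, for $N\in\Z$, the loop $(\varphi_{tN})_{t\in[0,1]}$ is a genuine loop of contactomorphisms based at the identity, so $\tphi_N$ represents an element of $\pi_1(\cont_0(M,\xi))$, and the fundamental group of any topological group sits in the center of its universal cover.

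The computation for $\ell_+$ goes as follows. By definition,
\[
\ell_+(\teta\cdot\tLambda_1,\teta\cdot\tLambda_0)=\inf\{N\in\Z\ |\ \teta\cdot\tLambda_1\cleq \tphi_N\cdot\teta\cdot\tLambda_0\}.
\]
Centrality of $\tphi_N$ gives $\tphi_N\cdot\teta=\teta\cdot\tphi_N$, so the condition inside the infimum becomes $\teta\cdot\tLambda_1\cleq \teta\cdot(\tphi_N\cdot\tLambda_0)$. By $\widetilde{\conto}$-invariance of $\cleq$, this is equivalent to $\tLambda_1\cleq \tphi_N\cdot\tLambda_0$. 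Therefore the two sets of integers over which the infima are taken coincide, proving $\ell_+(\teta\cdot\tLambda_1,\teta\cdot\tLambda_0)=\ell_+(\tLambda_1,\tLambda_0)$.

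The argument for $\ell_-$ is identical (replacing infimum by supremum and reversing the direction of $\cleq$), or alternatively one may deduce it immediately from the $\ell_+$ case together with Proposition~\ref{prop2}, since $\ell_-(\teta\cdot\tLambda_1,\teta\cdot\tLambda_0)=-\ell_+(\teta\cdot\tLambda_0,\teta\cdot\tLambda_1)=-\ell_+(\tLambda_0,\tLambda_1)=\ell_-(\tLambda_1,\tLambda_0)$.

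There is essentially no obstacle: the only point worth double-checking is that centrality of $\tphi_N$ really holds for every $N\in\Z$ and not merely for $N=\pm 1$. This is immediate from the group law on $\widetilde{\conto}$, since $\tphi_N$ is (up to the natural identifications recalled in Section~\ref{se:universal cover}) the $N$-th power of $\tphi_1\in\pi_1(\cont_0(M,\xi))$, and centrality is preserved under taking powers.
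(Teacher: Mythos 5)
Your proof is correct and follows exactly the same route as the paper: rewrite the defining set for $\ell_+$ using the centrality of $\tphi_N$ in $\widetilde{\conto}$ and then the $\widetilde{\conto}$-invariance of $\cleq$, and handle $\ell_-$ symmetrically. Your added justifications (why $\tphi_N$ is central for all $N\in\Z$, and the alternative derivation of the $\ell_-$ case via Proposition~\ref{prop2}) are sound but not needed beyond what the paper already invokes.
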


\begin{proof}
    We do it only for $\ell_+$ since the same arguments hold for $\ell_-$.
    \[\begin{aligned}\ell_+(\teta\cdot\tLambda_1,\teta\cdot\tLambda_0)&=\inf\{N\in\Z\ |\ \teta\cdot \tLambda_1\cleq\tphi_N\cdot \teta\cdot \tLambda_0\}\\
    &=\inf\{N\in\Z\ |\ \teta\cdot\tLambda_1\cleq\teta\cdot\tphi_N\cdot\tLambda_0\}\\
    &=\inf\{N\in\Z\ |\ \tLambda_1\cleq\tphi_N\cdot\tLambda_0\},
    \end{aligned}\]
    where the second equality holds because $\tphi_N$ is in the center of $\widetilde{\conto}$ and the third one because $\cleq$ is invariant. 
\end{proof}

\begin{proof}[Proof of Theorem \ref{thm2}]\

\begin{enumerate}
    \item Since $\ell_-\leq \ell_+$, thanks to Proposition \ref{prop1} the map $\ud$ takes value in $\N_{\geq 0}$. Moreover, for any $\tLambda_1,\tLambda_0\in\uLeg$\[\ud(\tLambda_0,\tLambda_1)=0\quad \Leftrightarrow \quad \tLambda_0\cleq \tLambda_1\cleq\tLambda_0 \quad \Leftrightarrow \quad \tLambda_1=\tLambda_0.\]
    \item Thanks to Proposition \ref{prop2} the map $\ud$ is symmetric, i.e. $\ud(\tLambda_1,\tLambda_0)=\ud(\tLambda_0,\tLambda_1)$.
    \item Thanks to Proposition \ref{prop3} the map $\ud$ satisfies the triangle inequality.
    \item Thanks to Proposition \ref{prop4} the map $\ud$ is $\widetilde{\conto}$ invariant.
    \end{enumerate}
Hence, $\ud$ is an invariant distance. Its compatibility with the partial order follows directly from its definition and the transitivity of $\cleq$. To complete the proof it remains to prove that $\ud$ is unbounded. For all $T>0$
\[\begin{aligned}\ell_+(\tLambda^T_*,\tLambda_*)=\inf\{N\in\Z\ |\ \tLambda^T_*\cleq\tphi_N\cdot\tLambda_*\}=\inf\{N\in\Z\ |\ \tLambda^T_*\cleq\tLambda_*^N\}=\lceil T\rceil
\end{aligned},\]
thus
$\ud(\tLambda_*^T,\tLambda_*)\geq T$ and $\ud$ is indeed unbounded.
\end{proof}

\section{Applications to the study of other invariant distances on $\uLeg$ and $\uGcont$}\label{se:autresdistances}
In this Section \ref{se:autresdistances} as before $\Lambda_*$ denotes a closed and connected Legendrian of a cooriented contact manifold $(M,\xi)$.

\subsection{Comparison with the spectral distance}\label{se:spectral distance}
 For simplicity, in this Subsection \ref{se:spectral distance}, we assume that $M$ is closed even if similar considerations could be carried on for open $M$. For any contact form $\alpha$ supporting $\xi$, and any $T\in\R$, we denote by $\varphi_\alpha^T\in\Gcont$ its Reeb flow at time $T$ and by $\tphi_\alpha^T\in\uGcont$ the element that can be represented by the path $(\varphi_\alpha^{tT})\subset\Gcont$. Recall from \cite{allais2023spectral} that when $\uLeg(\Lambda_*)$, or simply $\uLeg$, is orderable then the maps $\ell_\pm^\alpha : \uLeg\times\uLeg\to\R\cup\{\mp\infty\}$
\[\ell_+^\alpha(\tLambda_1,\tLambda_0):=\inf\{t\in\R \ |\ \tLambda_1\cleq\tphi_\alpha^t\cdot\tLambda_0\} \text{ and } \ell_-^\alpha(\tLambda_1,\tLambda_0):=\sup\{t\in\R\ |\ \tphi_\alpha^t\cdot\tLambda_0\cleq\tLambda_1\}\]
take value in $\R$. These maps are then used to construct an unbounded pseudo-distance $\dspec^\alpha :\uLeg\times\uLeg\to\R_{\geq 0}, (\tLambda_1,\tLambda_0)\mapsto \max\{\ell_+^\alpha(\tLambda_1,\tLambda_0),-\ell_-^\alpha(\tLambda_1,\tLambda_0)\}$ which is not invariant. A natural way to construct an invariant map from $\dspec^\alpha$ is the following
\begin{equation*}
    \ud_\infty^\alpha : \uLeg\times\uLeg\to\R_{\geq 0}\cup\{+\infty\} \quad \quad \quad (\tLambda_1,\tLambda_0)\mapsto \sup\{\dspec^\alpha(\tpsi\cdot\tLambda_1,\tpsi\cdot\tLambda_0)\ |\ \tpsi\in\uGcont\}
\end{equation*}

From now on until the end of Subsection \ref{se:spectral distance} we suppose that the hypothesis of Theorem \ref{thm} holds, i.e. $\uLeg$ is orderable and there is a positive loop of Legendrians $(\Lambda_*^t)_{t\in S^1}\subset\Leg$ that can be extended to a loop of contactomorphisms $(\varphi_t)_{t\in S^1}\subset\Gcont$ based at the identity, and we denote by $\ud$ the invariant distance constructed in Section \ref{se:proof} using the loop $(\varphi_t)_{t\in S^1}$.
\begin{thm}\label{thm:distance spectrale}
The map $\ud_\infty^\alpha$ takes value in $\R_{\geq 0}$ and thus is an unbounded invariant pseudo-distance. Moreover $(\uLeg,\ud)$ and $(\uLeg,\ud_\infty^\alpha)$ are quasi-isometric, i.e. there exist constants $A\geq 0$ and $B\geq 1$ such that $\frac{1}{B}\ud-A \leq \ud_\infty^\alpha\leq B\ud+A$. 
\end{thm}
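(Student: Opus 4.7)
Write $H : S^1 \times M \to \R$ for the $\alpha$-contact Hamiltonian of $(\varphi_t)$. Since $M$ is closed, $C_1 := \sup_{S^1\times M}|H| < \infty$; since $(\Lambda_*^t)=(\varphi_t(\Lambda_*))$ is a positive Legendrian loop, the restriction of $H$ to the compact track $\{(t,x)\,|\,x\in\Lambda_*^t\}$ is bounded between two finite positive constants $C_0 \geq c_0 > 0$. The plan is to compare the ``loop translation'' $\tphi_N$ with the Reeb translation $\tphi_\alpha^s$ at two complementary levels: globally in $\uGcont$, where the uniform bound $C_1$ will yield the upper bound $\ud_\infty^\alpha \leq C_1\,\ud$; and on the single orbit of $\tLambda_*$, where the orbit-level bounds involving $c_0,C_0$ together with transitivity of the $\uGcont$-action on $\uLeg$ will yield the lower bound $\ud \leq \ud_\infty^\alpha/c_0 + 1$. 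In both steps, the crucial fact is that $\tphi_N \in \pi_1(\Gcont) \subset \uGcont$ is central.

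For the upper bound, I first establish $\tphi_N \cleq \tphi_\alpha^{C_1 N}$ in $\uGcont$ for every integer $N \geq 0$. Consider the isotopy $\gamma_t := \varphi_\alpha^{C_1 t}\circ\varphi_{-t}$ for $t \in [0,N]$, which timewise represents $\tphi_\alpha^{C_1 N}\cdot\tphi_{-N}$. A direct calculation using the composition formula for $\alpha$-contact Hamiltonians together with the Reeb-invariance of $\alpha$ gives its $\alpha$-Hamiltonian as $C_1 - H(-t,\varphi_\alpha^{-C_1 t}(\,\cdot\,))$, which is non-negative by the definition of $C_1$. Hence $\id \cleq \tphi_\alpha^{C_1 N}\tphi_{-N}$, and right-multiplication by $\tphi_N$ gives $\tphi_N \cleq \tphi_\alpha^{C_1 N}$. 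Acting on $\tpsi\tLambda_0$ for any $\tpsi\in\uGcont$ and commuting $\tphi_N$ past $\tpsi$ by centrality, one obtains $\ell_+^\alpha(\tpsi\tLambda_1,\tpsi\tLambda_0) \leq C_1\,\max\{0,\ell_+(\tLambda_1,\tLambda_0)\}$. Applying this in both orderings $(\tLambda_1,\tLambda_0)$ and $(\tLambda_0,\tLambda_1)$ and using the analogue of Proposition \ref{prop2} for $\ell_\pm^\alpha$ to rewrite $-\ell_-^\alpha$ as $\ell_+^\alpha$ with swapped arguments yields $\dspec^\alpha(\tpsi\tLambda_1,\tpsi\tLambda_0) \leq C_1\,\ud(\tLambda_1,\tLambda_0)$ uniformly in $\tpsi$, hence $\ud_\infty^\alpha \leq C_1\,\ud$ and in particular $\ud_\infty^\alpha < +\infty$.

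For the lower bound, transitivity of the $\uGcont$-action lets me choose $\tpsi_0\in\uGcont$ with $\tpsi_0\tLambda_0=\tLambda_*$; by invariance $\ud(\tLambda_1,\tLambda_0)=\ud(\tpsi_0\tLambda_1,\tLambda_*)$, while $\dspec^\alpha(\tpsi_0\tLambda_1,\tLambda_*)\leq \ud_\infty^\alpha(\tLambda_1,\tLambda_0)$. On the orbit of $\tLambda_*$ I prove the two-sided comparison
\[
\tphi_\alpha^{c_0 N}\tLambda_*\cleq\tphi_N\tLambda_*\cleq\tphi_\alpha^{C_0 N}\tLambda_* \qquad (N\in\Z,\ N\geq 0)
\]
via the Legendrian isotopy $\Lambda_u := \varphi_\alpha^{-u k N}(\varphi_{uN}(\Lambda_*))$, $u\in[0,1]$, $k\in\{c_0,C_0\}$, whose $\alpha$-speed at $y\in\Lambda_u$ computes to $N\bigl(H(uN,\varphi_\alpha^{ukN}(y)) - k\bigr)$ with sign controlled by the defining bounds $c_0\leq H\leq C_0$ on the track; the timewise class is $\tphi_\alpha^{-kN}\tphi_N\tLambda_*$, so the inequalities follow by left-invariance. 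Left-multiplication by $\tphi_{-N}$ combined with centrality extends the comparison to $N\leq 0$, with the roles of $c_0$ and $C_0$ exchanged. Combining these with monotonicity of the Reeb flow and the rounding inequality $\lfloor x\rfloor\geq x-1$ gives, for each of $\ell_+$ and $-\ell_-$ separately, an upper bound of the form $\dspec^\alpha(\tpsi_0\tLambda_1,\tLambda_*)/c_0 + 1$; hence $\ud(\tLambda_1,\tLambda_0)\leq \ud_\infty^\alpha(\tLambda_1,\tLambda_0)/c_0 + 1$.

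Combining the two bounds yields the desired quasi-isometry with $B := \max\{C_1,1/c_0\}$ and $A := 1$. Symmetry, the triangle inequality, and $\uGcont$-invariance of $\ud_\infty^\alpha$ pass to the supremum from the corresponding properties of $\dspec^\alpha$, and unboundedness follows from the unboundedness of $\ud$ proved in Theorem \ref{thm2} together with the lower bound. The main subtlety is precisely the asymmetry of the two comparisons: the upper bound requires the \emph{global} inequality $\tphi_N\cleq\tphi_\alpha^{C_1 N}$ in $\uGcont$, which rests on $M$ being closed so that $C_1<\infty$, whereas the lower bound is only an \emph{orbit-level} statement at $\tLambda_*$ using the positive lower speed $c_0$ of the Legendrian loop. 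Reconciling them is exactly what makes the supremum in the definition of $\ud_\infty^\alpha$ essential: only the $\tpsi$'s that move $\tLambda_0$ onto the distinguished orbit of $\tLambda_*$ activate the sharp lower bound.
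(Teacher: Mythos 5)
Your proof is correct and follows essentially the same route as the paper's: your upper bound $\ud_\infty^\alpha\leq C_1\,\ud$, obtained from the explicit comparison $\tphi_N\cleq\tphi_\alpha^{C_1N}$ with $C_1=\sup|H|$, is the paper's Proposition~\ref{prop : distance spectrale'} with $C_1$ in place of $\Nspec{\tphi_1}^\alpha$ (which is bounded by $C_1$ via the very same Hamiltonian computation), and your lower bound rests on the identical orbit-level inequality $\tphi_\alpha^{c_0 N}\cdot\tLambda_*\cleq\tphi_N\cdot\tLambda_*$ with $c_0$ the minimum of $H$ on the track of the positive Legendrian loop, followed by the same reduction to pairs $(\tLambda,\tLambda_*)$ by invariance. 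The only quibble is the intermediate claim $\ell_+^\alpha(\tpsi\cdot\tLambda_1,\tpsi\cdot\tLambda_0)\leq C_1\max\{0,\ell_+(\tLambda_1,\tLambda_0)\}$, whose justification when $\ell_+<0$ needs an extra word, but the weaker bound $C_1|\ell_+|\leq C_1\,\ud$ already suffices for your conclusion.
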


\begin{rem}\label{rem:pseudodistance} In the previous Theorem \ref{thm:distance spectrale} we abusively referred to a quasi-isometry although $\ud_\infty^\alpha$ was merely a pseudo-distance. However the above statement can be made precise since any invariant pseudo-distance $D:\uLeg\times\uLeg\to\R_{\geq 0}$ can be turned into a genuine invariant distance $\overline{D}$, which is stricly discrete, by setting $\overline{D}(\tLambda_0,\tLambda_1):=\max \{D(\tLambda_0,\tLambda_1), 1\}$ if $\tLambda_0\ne\tLambda_1$, and $0$ otherwise.
\end{rem}

$\uLeg$ being orderable implies that $\uGcont$ is orderable and therefore the maps $c_\pm^\alpha : \uGcont \to\R\cup\{\mp\infty\}$ defined by $c_+^\alpha(\tphi):=\inf\{t\in\R\ |\ \tphi\cleq\tphi_\alpha^t\} \text{ and } c_-^\alpha(\tphi):=\sup\{t\in\R\ |\ \tphi_\alpha^t\cleq\tphi\}$
take value in $\R$ and the map $\Nspec{\cdot}^\alpha : \uGcont\to\R_{\geq 0},\ \tphi\mapsto \max\{c_+^\alpha(\tphi),-c_-^\alpha(\tphi)\}$ is an unbounded pseudo-norm on $\uGcont$ (see \cite{allais2023spectral,ArlovePhD}). Before proving Theorem \ref{thm:distance spectrale} let us prove the following. 
\begin{prop}\label{prop : distance spectrale'}
For any $\tLambda_1,\tLambda_0\in\uLeg$ we have $\dspec^\alpha(\tLambda_1,\tLambda_0)\leq \Nspec{\tphi_1}^\alpha\ud(\tLambda_1,\tLambda_0)$,
    where $\tphi_1\in\uGcont$ denotes the lift of the loop $(\varphi_t)_{t\in S^1}$.
\end{prop}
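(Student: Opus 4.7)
The plan is to estimate $\dspec^\alpha(\tLambda_1,\tLambda_0)$ by combining the bracketing of $\tLambda_1$ by powers of $\tphi_1$ that is furnished by $\ud$ with a comparison between those powers and Reeb flows, controlled by the pseudo-norm on $\uGcont$. Set $N:=\ud(\tLambda_1,\tLambda_0)$. Since $\ell_\pm$ are $\Z$-valued and the map $M\mapsto \tphi_M\cdot\tLambda_0$ is monotone (this is the claim established at the end of the proof of Proposition \ref{prop1}), the infimum and supremum defining $\ell_\pm$ are attained, and the inequalities $\ell_+(\tLambda_1,\tLambda_0)\leq N$, $\ell_-(\tLambda_1,\tLambda_0)\geq -N$ yield the bracketing
\[\tphi_{-N}\cdot\tLambda_0\cleq \tLambda_1\cleq \tphi_N\cdot\tLambda_0.\]
I would then use the identification $\tphi_N=\tphi_1^N$ in $\uGcont$, which is a standard Eckmann--Hilton type fact: for a topological group, concatenation of based loops and pointwise multiplication induce the same operation on $\pi_1$, applied here to the loop $(\varphi_t)_{t\in S^1}$ iterated $N$ times (one can write down an explicit homotopy rel endpoints interpolating between $t\mapsto \varphi_{tN}$ and $t\mapsto \varphi_t^N$).

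The core of the proof is to bound $\ell_\pm^\alpha$ by $c_+^\alpha(\tphi_N)$. For any $t>c_+^\alpha(\tphi_N)$ we have $\tphi_N\cleq \tphi_\alpha^t$, so invariance of $\cleq$ on $\uLeg$ gives $\tphi_N\cdot\tLambda_0\cleq \tphi_\alpha^t\cdot\tLambda_0$; combined with $\tLambda_1\cleq \tphi_N\cdot\tLambda_0$, transitivity yields $\tLambda_1\cleq \tphi_\alpha^t\cdot\tLambda_0$, whence $\ell_+^\alpha(\tLambda_1,\tLambda_0)\leq t$. Letting $t\downarrow c_+^\alpha(\tphi_N)$ gives $\ell_+^\alpha(\tLambda_1,\tLambda_0)\leq c_+^\alpha(\tphi_N)$. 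Symmetrically, applying the argument to $\tphi_{-N}\cdot\tLambda_0\cleq \tLambda_1$ together with the identity $c_-^\alpha(\tphi^{-1})=-c_+^\alpha(\tphi)$ (which follows from the fact that inversion reverses $\cleq$ on $\uGcont$, a consequence of bi-invariance) yields $-\ell_-^\alpha(\tLambda_1,\tLambda_0)\leq -c_-^\alpha(\tphi_{-N})=c_+^\alpha(\tphi_N)$. Therefore
\[\dspec^\alpha(\tLambda_1,\tLambda_0)=\max\{\ell_+^\alpha,-\ell_-^\alpha\}(\tLambda_1,\tLambda_0)\leq c_+^\alpha(\tphi_N).\]

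To finish, I would invoke the subadditivity $c_+^\alpha(\tphi\cdot\tpsi)\leq c_+^\alpha(\tphi)+c_+^\alpha(\tpsi)$ on $\uGcont$. This is immediate from bi-invariance of $\cleq$ together with $\tphi_\alpha^s\cdot\tphi_\alpha^r=\tphi_\alpha^{s+r}$: from $\tphi\cleq \tphi_\alpha^s$ and $\tpsi\cleq \tphi_\alpha^r$ one deduces $\tphi\cdot\tpsi\cleq \tphi_\alpha^s\cdot\tphi_\alpha^r=\tphi_\alpha^{s+r}$. Iterating, $c_+^\alpha(\tphi_1^N)\leq N\,c_+^\alpha(\tphi_1)$, and together with the trivial bound $c_+^\alpha(\tphi_1)\leq \Nspec{\tphi_1}^\alpha$ this gives
\[\dspec^\alpha(\tLambda_1,\tLambda_0)\leq c_+^\alpha(\tphi_N)\leq N\,c_+^\alpha(\tphi_1)\leq N\,\Nspec{\tphi_1}^\alpha=\Nspec{\tphi_1}^\alpha\,\ud(\tLambda_1,\tLambda_0),\]
as required. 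The argument is essentially formal; the one non-routine point is the identification $\tphi_N=\tphi_1^N$, which though standard deserves care about the convention for group multiplication in $\uGcont$ (pointwise versus concatenation).
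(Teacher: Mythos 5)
Your proof is correct and follows essentially the same route as the paper: both extract the bracketing $\tphi_{-N}\cdot\tLambda_0\cleq\tLambda_1\cleq\tphi_N\cdot\tLambda_0$ from $N=\ud(\tLambda_1,\tLambda_0)$, compare $\tphi_{\pm N}$ with Reeb flow times via the subadditivity of $c_\pm^\alpha$ (which is exactly the triangle inequality for $\Nspec{\cdot}^\alpha$ that the paper invokes), and push this through the invariance of $\cleq$ on $\uLeg$. Your extra care about $\tphi_N=\tphi_1^N$ and about $c_-^\alpha(\tphi^{-1})=-c_+^\alpha(\tphi)$ only makes explicit steps the paper leaves implicit.
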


\begin{proof}
    Let $c:=\Nspec{\tphi_1}^\alpha$ and $N:=\ud(\tLambda_1,\tLambda_0)$. By triangle inequality $\Nspec{\tphi_N}^\alpha\leq Nc$. This implies that $\tphi_N\cleq\tphi_\alpha^{Nc+\varepsilon}$ and $\tphi_\alpha^{-(Nc+\varepsilon)}\cleq \tphi_{-N}$ for any $\varepsilon>0$. Therefore \[\tLambda_1\cleq\tphi_N\cdot \tLambda_0\cleq \tphi_\alpha^{Nc+\varepsilon}\cdot\tLambda_0 \quad \text{ and } \quad \tphi_\alpha^{-(Nc+\varepsilon)}\cdot\tLambda_0\cleq \tphi_{-N}\cdot\tLambda_0\cleq \tLambda_1.\]
So $\dspec^\alpha(\tLambda_1,\tLambda_0)\leq Nc+\varepsilon$ for any $\varepsilon>0$ and the result follows. \end{proof}


\begin{proof}[Proof of Theorem \ref{thm:distance spectrale}]
The fact that $\ud_\infty^\alpha$ takes value in $\R_{\geq 0}$ is a direct consequence of Proposition \ref{prop : distance spectrale'} and the invariance of $\ud$. Let us prove that $(\uLeg,\ud)$ and $(\uLeg,\ud_\infty^\alpha)$ are quasi-isometric. By invariance of $\ud$ and $\ud_\infty^\alpha$, it is enough to prove the existence of $A\geq 0$ and $B\geq 1$ satisfying
$
    \frac{1}{B}\ud(\tLambda,\tLambda_*)-A\leq\ud_\infty^\alpha(\tLambda,\tLambda_*)\leq B\ud (\tLambda,\tLambda_*)+A$ for all pairs $(\tLambda,\tLambda_*)\in\uLeg^2$ so that $\tLambda\in\uLeg$ is arbitrary and $\tLambda_*\in\uLeg$ is the constant isotopy equivalence class. Let  $H : S^1\times M\to\R$ be the $\alpha$-Hamiltonian function of $(\varphi_t)_{t\in S^1}$. Let $m:=\min\{H_s(x)\ |\ (s,x)\in\underset{t\in S^1}\bigcup\{t\}\times\Lambda_*^t\}>0$. We claim that $\ud(\tLambda,\tLambda_*)\leq C\left\lceil\dspec^\alpha(\tLambda,\tLambda_*)\right\rceil+1$, where $C:=\lceil\frac{1}{m}\rceil$. Admitting this claim it follows that $\ud(\tLambda,\tLambda_*)\leq C(\ud_\infty^\alpha(\tLambda,\tLambda_*)+1+\frac{1}{C})$. Combining the latter inequality with $ \ud_\infty^\alpha(\tLambda,\tLambda_*)\leq \Nspec{\tphi_1}^\alpha\ud(\tLambda,\tLambda_*)$ coming from Proposition \ref{prop : distance spectrale'} we deduce that the constants $A:=1+\frac{1}{C}$ and $B:=\max\{1,C,\Nspec{\tphi_1}^\alpha\}$ are the desired ones. It remains to prove the claim. Denote by $\ell_\pm^\alpha:=\ell_\pm^\alpha(\tLambda,\tLambda_*)$. Note that since $\dspec^\alpha(\tLambda,\tLambda_*)=\max\{\ell_+^\alpha,-\ell_-^\alpha\}$ and $\ell_-^\alpha\leq\ell_+^\alpha$, we can treat only the cases where $\ell_+^\alpha\geq 0$ and $\ell_-^\alpha\leq 0$. Since,
$\tphi_\alpha^{Tm}\cdot \tLambda_*\cleq\tphi_T\cdot\tLambda_*$ if $T\geq 0$, and $\tphi_T\cdot \tLambda_*\cleq \tphi_\alpha^{Tm}\cdot \tLambda_*$ if $T\leq 0$, we get  $\ell_+(\tLambda,\tLambda_*)\leq \left\lceil \ell_+^\alpha/m\right\rceil+1 $, and $\ell_-(\tLambda,\tLambda_*)\geq \left\lfloor \ell_-^\alpha/m\right\rfloor-1$. Hence,
$\ud(\tLambda,\tLambda_*)\leq\max\left\{\left\lceil \ell_+^\alpha/m\right\rceil,-\left\lfloor \ell_-^\alpha/m\right\rfloor\right\}+1$, and the claim follows. \end{proof}


\subsection{Comparison with the oscillation distance of Colin-Sandon}\label{sec : discriminante}
Let us recall the definition of $\dCSosc$. A Legendrian isotopy $(\Lambda_t)$ is monotone, if it is either a positive Legendrian isotopy or a negative one, and it is embedded if $\Lambda_{t_1}\cap\Lambda_{t_2}=\emptyset$ for all $t_1\ne t_2$. If $(\Lambda_t)$ and $(\Lambda_t')$ are two Legendrian isotopies such that $\Lambda_1=\Lambda_0'$, we define the concatenation of these two isotopies $(\Lambda_t*\Lambda_t')$ to be
\begin{equation}\label{eq:concatenation}
    t\mapsto
    \begin{cases}
        \Lambda_{a(2t)} & t\in[0,1/2],\\
        \Lambda_{a(2t-1)}' & t\in [1/2,1],
    \end{cases}
\end{equation}
where $a:[0,1]\to [0,1]$ is a surjective non decreasing smooth function which is equal to $0$ in a neighborhood of $0$ and $1$ in a neighborhood of $1$. Moreover, for arbitrary $\tLambda\in\uLeg(\Lambda_*)$,  $\tLambda'\in\Leg(\Pi(\tLambda))$, by concatenating any two of their representatives, one can define without ambiguity $\tLambda*\tLambda'$. For any Legendrian $\Lambda_0\in\Leg(\Lambda_*)$, and any $\tLambda\in\uLeg(\Lambda_0)$, Colin-Sandon in \cite{discriminante} defined the number $\losc^+(\tLambda)$ by
\begin{equation*}\label{eq:loscp}
    \losc^+(\tLambda) := \min\left\{ k\in\N\ \left|\
        \parbox{7.3cm}{there exist embedded monotone isotopies $(\Lambda_t^1),\ldots,(\Lambda_t^n)$,
            $k$ of which are positive,
    such that $[(\Lambda_t^1*\cdots*\Lambda_t^n)]=\tLambda\in\uLeg(\Lambda_0)$}\right.\right\}\in\N\bigcup\{+\infty\},
\end{equation*}
with conventions $\min\emptyset=+\infty$ and $\losc^+(\tLambda)=0$ if $\tLambda$ is can be represented by the constant isotopy. Finally, we denote by $\tLambda^{-1}\in\uLeg(\Lambda_1)$ the equivalence class of $(\Lambda_{1-t})$, $(\Lambda_t)$ being any representative of $\tLambda$.
\begin{prop}[\cite{discriminante}]
  If $\uLeg(\Lambda_*)$ is orderable then the map $\dCSosc : \uLeg(\Lambda_*)\times\uLeg(\Lambda_*)\to \N\cup\{+\infty\}$ defined by $\dCSosc(\tLambda,\tLambda'):=\losc^+(\widetilde{\gamma})+\losc^+(\widetilde{\gamma}^{-1})$ for any $\tLambda,\tLambda'\in\uLeg(\Lambda_*)$ and any $\widetilde{\gamma}\in\uLeg(\Pi(\tLambda))$ such that $\tLambda*\widetilde{\gamma}=\tLambda'$, takes value in $\N$ and is a distance compatible with the partial order, i.e. $x\cleq y \cleq z\Rightarrow\dCSosc(x,y)\leq \dCSosc(x,z)$. 
\end{prop}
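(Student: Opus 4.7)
The plan is to verify each distance axiom and the order compatibility using (i) the groupoid structure of the universal cover, (ii) a general position argument for finite embedded monotone decompositions, and (iii) an approximation lemma exploiting orderability.

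Well-definedness follows from the groupoid structure: given $\tLambda,\tLambda'\in\uLeg(\Lambda_*)$, picking representatives and forming the reverse-concatenation yields a path based at $\Pi(\tLambda)$ whose homotopy class $\widetilde{\gamma}\in\uLeg(\Pi(\tLambda))$ is independent of the choices and satisfies $\tLambda*\widetilde{\gamma}=\tLambda'$. Finiteness $\losc^+(\widetilde{\gamma})<+\infty$ follows from a standard transversality argument: any representative admits, after a $C^1$-small perturbation rel endpoints, a finite partition $0=t_0<\cdots<t_m=1$ on which $\alpha(\partial_t\Lambda_t)$ has constant nonzero sign and the restriction is embedded.

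Symmetry is immediate since $(\widetilde{\gamma}^{-1})^{-1}=\widetilde{\gamma}$. The triangle inequality follows by concatenating decompositions: a decomposition of $\widetilde{\gamma}_{01}$ with $k_1$ positives and one of $\widetilde{\gamma}_{12}$ with $k_2$ positives give a decomposition of $\widetilde{\gamma}_{01}*\widetilde{\gamma}_{12}=\widetilde{\gamma}_{02}$ with $k_1+k_2$ positives, and likewise for the inverses. For non-degeneracy, $\dCSosc(\tLambda,\tLambda')=0$ means both $\widetilde{\gamma}$ and $\widetilde{\gamma}^{-1}$ admit decompositions into embedded negative monotone isotopies, yielding $\tLambda'\cleq\tLambda$ and $\tLambda\cleq\tLambda'$; orderability of $\uLeg(\Lambda_*)$ then forces $\tLambda=\tLambda'$.

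The subtlest point is compatibility with the partial order. The key auxiliary claim is that $\losc^+(\widetilde{\gamma})=0$ whenever $\widetilde{\gamma}$ is non-positive: one homotopes rel endpoints to a strictly negative isotopy, then invokes orderability of $\uLeg(\Lambda_*)$ to rule out self-intersections in $\uLeg$ (such a self-intersection would produce a negative, hence after time reversal positive, loop in $\uLeg$), and finally removes any remaining self-intersections in $\Leg$ by finitely many further subdivisions. Assuming $x\cleq y\cleq z$, both $\widetilde{\gamma}_{xy}^{-1}$ and $\widetilde{\gamma}_{xz}^{-1}$ are non-positive, so the claim gives $\losc^+(\widetilde{\gamma}_{xy}^{-1})=\losc^+(\widetilde{\gamma}_{xz}^{-1})=0$. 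Moreover, given any decomposition of $\widetilde{\gamma}_{xz}$ with $k$ positive pieces, one appends an embedded negative monotone representative of $\widetilde{\gamma}_{zy}=\widetilde{\gamma}_{yz}^{-1}$ (which is non-positive since $y\cleq z$, hence has $\losc^+=0$) to obtain a decomposition of $\widetilde{\gamma}_{xz}*\widetilde{\gamma}_{zy}=\widetilde{\gamma}_{xy}$ with the same $k$ positive pieces. Hence $\losc^+(\widetilde{\gamma}_{xy})\leq\losc^+(\widetilde{\gamma}_{xz})$, and the compatibility follows. The main obstacle is the approximation lemma for non-positive paths that underlies both the finiteness of decompositions and the order compatibility; once it is established via transversality combined with orderability, the rest of the proof reduces to formal manipulation in the universal-cover groupoid $\uLeg(\Lambda_*)$.
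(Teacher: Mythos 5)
The paper does not actually prove this proposition: it is quoted from \cite{discriminante}, so your outline can only be measured against the standard argument of Colin--Sandon, whose overall architecture it correctly reproduces (groupoid well-definedness of $\widetilde{\gamma}$, subadditivity of $\losc^+$ under concatenation for the triangle inequality, antisymmetry of $\cleq$ for non-degeneracy, and the reduction of order-compatibility to the vanishing of $\losc^+$ on non-positive classes together with $\widetilde{\gamma}_{xy}=\widetilde{\gamma}_{xz}*\widetilde{\gamma}_{zy}$). All of these formal steps are right.

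The genuine gap lies in the two analytic inputs you treat as routine. First, finiteness: you claim a $C^1$-small perturbation rel endpoints yields a partition $0=t_0<\cdots<t_m=1$ on which the sign of $\alpha(\partial_t\Lambda_t)$ is constant. Monotonicity of a piece requires the contact Hamiltonian $h(t,x)=\alpha\bigl(\tfrac{d}{dt}\varphi_t(x)\bigr)$ to have a fixed sign on the entire slab $[t_i,t_{i+1}]\times\Lambda_*$, and no small perturbation achieves this in general: if $h(t_0,\cdot)$ already takes values of both signs of order one along $\Lambda_*$ (the generic situation), a $C^0$-small change of $h$ cannot make any slab containing $t_0$ monotone, and transversality only makes $h^{-1}(0)$ a hypersurface of $[0,1]\times\Lambda_*$, not a union of time-slices. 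The correct mechanism is a non-small ``zigzag'': subdivide into short pieces and replace each piece, up to homotopy rel endpoints, by a large positive Reeb push followed by the piece composed with the compensating negative push; for the push large enough both factors are strictly monotone, and strictly monotone pieces become embedded after further subdivision (in a Weinstein chart a strictly positive short piece is a family of $1$-jets of pointwise increasing functions). Second, your auxiliary claim that a non-constant non-positive class admits a strictly negative representative is precisely the nontrivial ``positivity-spreading'' lemma (the relation $\cleq$ coincides off the diagonal with the relation defined by strictly monotone paths); it is true but is not obtained by the self-intersection discussion you give --- embeddedness concerns $\Lambda_{t_1}\cap\Lambda_{t_2}=\emptyset$ as subsets of $M$, not self-intersections of the path in $\Leg$ or $\uLeg$, and orderability plays no role in achieving it. With these two lemmas supplied, the remainder of your argument goes through.
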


\begin{prop}\label{prop : distance spectrale}
    Suppose $\uLeg(\Lambda_*)$ is orderable and that there is a positive loop of Legendrians $(\Lambda_*^t)_{t\in S^1}\subset\Leg(\Lambda_*)$ that can be extended to a loop of contactomorphisms $(\varphi_t)_{t\in S^1}\subset\Gcont$ based at the identity. Then for any $\tLambda,\tLambda'\in\uLeg(\Lambda_*)$
    \[\dCSosc(\tLambda,\tLambda')\leq 3 A \cdot \ud(\tLambda,\tLambda'),\]
$A=\dCSosc(\tLambda_*,\tphi_1\cdot \tLambda_*)$, $\tLambda_*\in\uLeg(\Lambda_*)$ being the constant isotopy equivalence class, $\tphi_1\in\widetilde{\conto}$ the lift of the loop $(\varphi_t)_{t\in S^1}$ and $\ud$ the distance defined in Theorem \ref{thm2}.
\end{prop}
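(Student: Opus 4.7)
The plan is to sandwich $\tLambda'$ between two integer translates of $\tLambda$ by $\tphi_1$ and estimate each side of the sandwich using the $\widetilde{\conto}$-invariance of $\dCSosc$ and its compatibility with the partial order. Set $N:=\ud(\tLambda,\tLambda')\in\N$. Since $\ell_\pm(\tLambda',\tLambda)\in\Z$ by Proposition~\ref{prop1} and the subsets of $\Z$ defining them are respectively upward and downward closed -- because $\tphi_{N_1}\cdot\tLambda\cleq\tphi_{N_2}\cdot\tLambda$ whenever $N_1\leq N_2$, as shown at the end of that proof -- these extrema are attained, so
\[\tphi_{-N}\cdot\tLambda\cleq\tLambda'\cleq\tphi_N\cdot\tLambda.\]

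The key intermediate identity I would establish is $\dCSosc(\tLambda,\tphi_1\cdot\tLambda)=A$. Picking $\tpsi\in\widetilde{\conto}$ with $\tpsi\cdot\tLambda_*=\tLambda$ (transitivity of the action on $\uLeg$) and using that $\tphi_1\in\pi_1(\conto)$ is central in $\widetilde{\conto}$,
\[\dCSosc(\tLambda,\tphi_1\cdot\tLambda)=\dCSosc(\tpsi\cdot\tLambda_*,\tpsi\cdot\tphi_1\cdot\tLambda_*)=\dCSosc(\tLambda_*,\tphi_1\cdot\tLambda_*)=A,\]
the final equality coming from the $\widetilde{\conto}$-invariance of $\dCSosc$. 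Iterating the triangle inequality together with invariance then yields $\dCSosc(\tLambda,\tphi_M\cdot\tLambda)\leq |M|A$ for every $M\in\Z$.

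It remains to combine these inputs. Triangle inequality gives
\[\dCSosc(\tLambda,\tLambda')\leq\dCSosc(\tLambda,\tphi_{-N}\cdot\tLambda)+\dCSosc(\tphi_{-N}\cdot\tLambda,\tLambda').\]
The first summand is at most $NA$ by the previous paragraph. For the second, the sandwich and the compatibility of $\dCSosc$ with the partial order give
\[\dCSosc(\tphi_{-N}\cdot\tLambda,\tLambda')\leq\dCSosc(\tphi_{-N}\cdot\tLambda,\tphi_N\cdot\tLambda)=\dCSosc(\tLambda,\tphi_{2N}\cdot\tLambda)\leq 2NA,\]
the equality using invariance and centrality of $\tphi_N$. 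Adding the two bounds yields $\dCSosc(\tLambda,\tLambda')\leq 3NA=3A\cdot\ud(\tLambda,\tLambda')$, as required.

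The main subtlety is justifying the $\widetilde{\conto}$-invariance of $\dCSosc$. Invariance under a single contactomorphism $\varphi\in\conto$ is immediate, since embeddedness, monotonicity and positivity of Legendrian isotopies are all preserved by $\varphi$. For a genuine time-dependent representative $[(\psi_t)]\in\widetilde{\conto}$ acting time-wise on a concatenation of embedded monotone pieces $(\Lambda_t^1)*\cdots*(\Lambda_t^n)$, one must check that $(\psi_t(\Lambda_t^i))$ admits a decomposition with the same number of positive monotone pieces. I would reduce this to the previous case by choosing a representative of $[(\psi_t)]$ that is locally constant on each time-subinterval supporting one of the monotone pieces and concentrates its motion in arbitrarily short buffer intervals between them; this is the point of the argument I expect to require the most care in a complete write-up.
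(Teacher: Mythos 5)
Your argument is correct and follows essentially the same route as the paper's proof: the same sandwich $\tphi_{-N}\cdot\tLambda\cleq\tLambda'\cleq\tphi_N\cdot\tLambda$ with $N=\ud(\tLambda,\tLambda')$, the same bound $\dCSosc(\tLambda,\tphi_k\cdot\tLambda)\leq kA$ via invariance and centrality of $\tphi_k$, and the same split into a $NA$ piece and a $2NA$ piece using compatibility with the partial order. The invariance of $\dCSosc$ that you flag as the delicate point is indeed used silently in the paper and is imported from the Colin--Sandon construction; your reduction to invariance under the single contactomorphism $\psi_1$ is the standard way to see it.
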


The proof of Proposition \ref{prop : distance spectrale} repeats almost verbatim the proof of Theorem 4.19 \cite{allais2023spectral}.
\begin{proof}
    First, for any $k\in \N$ and any $\tLambda\in\uLeg(\Lambda_*)$ we have 
\begin{equation}\label{eq:oscillation}
\dCSosc(\tLambda,\tphi_k\cdot\tLambda)=\dCSosc(\tLambda_*,\tphi_k\cdot\tLambda_*)\leq\sum\limits_{j=1}^k\dCSosc(\tphi_{j-1}\cdot\tLambda_*,\tphi_j\cdot\tLambda_*)=kA, 
    \end{equation}
    where both equalities use the invariance of $\dCSosc$ and the first one uses also the central property of $\tphi_k$. Now, let $\tLambda,\tLambda'\in\uLeg(\tLambda_*)$ and $k_0\in\N$ so that $\tphi_{-k_0}\cdot \tLambda\cleq \tLambda'\cleq\tphi_{k_0}\cdot\tLambda$. By compatibility of $\dCSosc$ with the partial order we get 
\begin{equation}\label{eq:oscillation'}
        \dCSosc(\tphi_{-k_0}\cdot \tLambda,\tLambda')\leq \dCSosc(\tphi_{-k_0}\cdot\tLambda,\tphi_{k_0}\cdot\tLambda)=\dCSosc(\tLambda,\tphi_{2k_0}\cdot\tLambda)\leq 2k_0A
    \end{equation}
    thanks to the relations \eqref{eq:oscillation} above. Finally, using the triangle inequality and both relations \eqref{eq:oscillation} and \eqref{eq:oscillation'} we get $\dCSosc(\tLambda,\tLambda')\leq \dCSosc(\tLambda,\tphi_{-k_0}\cdot\tLambda)+\dCSosc(\tphi_{-k_0}\cdot \tLambda,\tLambda')\leq 3k_0A$ which concludes the proof since one can choose $k_0:=\ud(\tLambda,\tLambda')$. 
\end{proof}

\section{Towards new invariant distances on $\uGcont$}\label{se:openquestion}

As already mentioned in the introduction, for closed contact manifold $(M,\xi)$, unbounded invariant distances on $\uGcont$ have been constructed only when the underlying contact manifold $(M,\xi)$ admits a positive loop $(\psi_t)\subset\Gcont$.  Theorem \ref{thm2} gives some new insights on how one should be able to construct an unbounded invariant distance on $\uGcont$ even if $(M,\xi)$ does not admit a positive loop $(\psi_t)\subset\Gcont$. Indeed, suppose again that $\uLeg(\Lambda_*)$ is orderable, for some closed Legendrian $\Lambda_*\subset (M,\xi)$,  and that there is a positive loop of Legendrians $(\Lambda_*^t)_{t\in S^1}\subset\Leg(\Lambda_*)$ that can be extended to a loop of contactomorphisms $(\varphi_t)_{t\in S^1}\subset\Gcont$ based at the identity and consider the invariant distance $\ud : \uLeg(\Lambda_*)\times\uLeg(\Lambda_*)\to\N$ defined in Theorem \ref{thm2}. It is easy to see that the map 
\begin{equation}\label{eq : norme}\nu : \uGcont \to \N\cup\{+\infty\}\quad \quad \quad \tpsi\mapsto \sup\{\ud(\tLambda,\tpsi\cdot\tLambda)\ | \ \tLambda\in \uLeg(\Lambda_*)\}\end{equation}
is an unbounded conjugation invariant norm if and only if it takes value in $\N$. Thanks to Proposition \ref{prop : distance spectrale}, for any contact form $\alpha$ supporting $\xi$ and any $T>0$
\[\sup\{\ell_+^\alpha(\tpsi\cdot\tphi_\alpha^T\cdot \tLambda_*,\tpsi\cdot \tLambda_*)\ |\ \tpsi\in\uGcont\}<+\infty.\]
If moreover, there exists $T>0$ such that  $\inf\{\ell_-^\alpha(\tpsi\cdot\tphi_\alpha^T\cdot\tLambda_*,\tpsi\cdot\tLambda_*)\ |\ \tpsi\in\uGcont\}>0$ then the map $\nu$ does take value in $\N$ and thus would define an unbounded conjugation invariant norm.

\section{Appendix : On the universal cover}\label{se:appendix}
Recall that a topological space $X$ which is connected and locally simply connected admits a universal cover $\widetilde{X}$. Moreover, as a set, $\widetilde{X}$ can be identified with equivalence classes of continuous paths $\gamma : [0,1]\to X$ starting at a fixed point $x_0\in X$. The equivalence relation $\sim_\tau$ on continuous paths $C^0([0,1],X)$ is defined as follow: $\gamma \sim_\tau \gamma'$ if the exists a continuous map $\Gamma : [0,1]\times [0,1]\to X$ such that $\Gamma(s,0)=\gamma(0)=\gamma'(0)$, $\Gamma(s,1)=\gamma(1)=\gamma'(1)$, $\Gamma(0,t)=\gamma(t)$ and $\Gamma(1,t)=\gamma'(t)$. See \cite{johnlee} for more details. \\

In this appendix the aim is to show that for a closed and connected Legendrian $\Lambda_*$ in a cooriented contact manifold $(M,\xi)$ the space $\Leg(\Lambda_*)$ with the $C^k$-topology, for $k\in \N_{>0}\cup\{+\infty\}$, admits a universal cover in the sense described just above that coincides with $\uLeg(\Lambda_*)$ described in Section \ref{se:preliminaire}.\\

More precisely the $C^k$-topology on $\Leg(\Lambda_*)$ is by definition the topology it inherits as a subset of the quotient topological space $C^\infty(\Lambda_*,M)/\mathrm{Diff}(\Lambda_*)$, i.e. the space of smooth maps from $\Lambda_*$ to $M$, endowed with the $C^k$-topology (see \cite{Hirsch1994}), modulo composition by diffeomorphisms of $\Lambda_*$. From now on, we denote by $\Leg_k(\Lambda_*)$ the space $\Leg(\Lambda_*)$ endowed with the $C^k$-topology and by $C_{k+1}^\infty(\Lambda,\R)$ the space of smooth functions defined on a closed manifold $\Lambda$ endowed with the $C^{k+1}$-topology. \\

It is clear that $\Leg_k(\Lambda_*)$ is path connected by definition of $\Leg(\Lambda_*)$ (see Section  \ref{se: isotopies}). Moreover, any point $\Lambda\in\Leg_k(\Lambda_*)$ admits a Weinstein neighborhood, i.e. a neighborhood that is homeomorphic to a convex neighborhood of the $0$-function in $C_{k+1}^\infty(\Lambda,\R)$ (see for example \cite{Tsuboi}). Therefore $\Leg_k(\Lambda_*)$ is locally contractible. We denote by $\uLeg_k(\Lambda_*):=\{\gamma\in C^0([0,1],\Leg_k(\Lambda_*))\ |\ \gamma(0)=\Lambda_*\}/\sim_\tau$ the universal cover of $\Leg_k(\Lambda_*)$.

\begin{prop}\label{prop:universalcover}
    The sets $\uLeg_k(\Lambda_*)$ and $\uLeg(\Lambda_*)$ are canonically isomorphic, i.e. there exists a canonical bijection between $\uLeg(\Lambda_*)$ and $\uLeg_k(\Lambda_*)$. 
\end{prop}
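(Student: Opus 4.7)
The plan is to construct a canonical bijection $\Phi : \uLeg(\Lambda_*)\to\uLeg_k(\Lambda_*)$ by sending a smooth isotopy class $[(\Lambda_t)]$ to the class of $(\Lambda_t)$ viewed as a continuous path in $\Leg_k(\Lambda_*)$. Well-definedness is immediate: a smooth Legendrian isotopy is in particular a continuous path in $\Leg_k$, and a smooth isotopy of isotopies (i.e.\ a map $\Gamma$ as in Section~\ref{se:universal cover}) yields a continuous map $[0,1]^2\to\Leg_k(\Lambda_*)$ realizing a $\sim_\tau$-homotopy rel endpoints. The real content is to prove that $\Phi$ is surjective and injective, which amounts to saying that every continuous path of Legendrians can be smoothed and that any continuous homotopy between two smooth Legendrian isotopies can be smoothed rel endpoints.

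For surjectivity, I would start from a continuous path $\gamma\in C^0([0,1],\Leg_k(\Lambda_*))$ with $\gamma(0)=\Lambda_*$ and use the Weinstein-type neighborhood recalled in the statement together with compactness of $[0,1]$ to extract a subdivision $0=t_0<t_1<\cdots<t_N=1$ and Legendrians $\Lambda^i$ such that $\gamma([t_{i-1},t_i])$ lies in a Weinstein neighborhood $\mathcal{W}_i$ of $\Lambda^i$ homeomorphic to a convex open neighborhood $V_i\subset C^\infty_{k+1}(\Lambda^i,\R)$ of the zero function. Via this homeomorphism, $\gamma|_{[t_{i-1},t_i]}$ corresponds to a continuous path in $V_i$; convexity of $V_i$ allows one to homotope it rel endpoints to a smooth path of functions, either by linear interpolation reparameterized to match the endpoints, or by the standard mollification argument for continuous maps into a Fréchet space restricted to a convex open subset. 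Concatenating the smooth pieces and smoothing at the breakpoints $t_i$ produces a smooth Legendrian isotopy whose class under $\Phi$ equals $[\gamma]$.

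For injectivity, I would work with a continuous $\Gamma:[0,1]^2\to\Leg_k(\Lambda_*)$ realizing the $\sim_\tau$-homotopy rel endpoints between two smooth Legendrian isotopies $(\Lambda_t)$ and $(\Lambda_t')$. By compactness of $[0,1]^2$, I cover the square by a sufficiently fine rectangular grid such that the $\Gamma$-image of each closed sub-rectangle is contained in a single Weinstein neighborhood; on each such sub-rectangle, $\Gamma$ corresponds to a continuous map into the convex open set $V_i\subset C^\infty_{k+1}(\Lambda^i,\R)$ and can thus be homotoped to a smooth map. Ordering the rectangles lexicographically, I propagate the smoothing and force the new smooth homotopy to agree on the edges of the grid with what was already built on the previous rectangles: this is possible because, by refining the grid, any two adjacent rectangles can be assumed to share a common Weinstein neighborhood, and in that common chart two convex-combination smoothings glue by a further convex interpolation. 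On the four edges of $[0,1]^2$ the pre-existing smooth data — the isotopies $(\Lambda_t)$, $(\Lambda_t')$, and the constant isotopies $\{\Lambda_*\}$, $\{\Lambda_1\}$ — is used as the anchor of the interpolation, so the smooth homotopy remains rel endpoints in the sense of Section~\ref{se:universal cover}.

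The main obstacle will be coordinating the smoothings across overlapping Weinstein patches in the two-dimensional step: because the function-space parameterization depends on the reference Legendrian chosen on each patch, one must refine the grid so that neighboring patches admit a common Weinstein neighborhood, inside which a single convex interpolation reconciles both local smoothings. This is a direct analogue, transferred to the Legendrian setting via the Weinstein-type theorem of Section~\ref{se:discret}, of the standard fact that the inclusion of smooth into continuous mapping spaces induces a bijection on path-components and on fundamental groupoids; the Legendrian constraint is absorbed into the function-space model, and no further contact-geometric input is needed beyond the local normal form.
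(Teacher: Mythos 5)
Your proposal is correct and follows essentially the same route as the paper: the same canonical map induced by viewing a smooth isotopy as a continuous path, with surjectivity and injectivity both reduced, via a fine subdivision of $[0,1]$ (resp. $[0,1]^2$) and the Weinstein neighborhoods modelled on convex open sets of $C^\infty_{k+1}(\Lambda,\R)$, to linear/convex interpolation in those function-space charts. The only (cosmetic) difference is organizational: the paper first replaces each vertical slice $\Gamma(\tfrac{i}{N},\cdot)$ of the homotopy by a genuine Legendrian isotopy lying in the intersection of adjacent charts and then convexly interpolates between consecutive slices, rather than propagating the smoothing rectangle by rectangle.
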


Endowing $C^0([0,1],\Leg_k(\Lambda_*))$ with the $C^0$-topology we prove the following Lemma that will help proving Proposition \ref{prop:universalcover}.

\begin{lem}\label{lem:universalcover}
    For any $\gamma\in C^0([0,1],\Leg_k(\Lambda_*))$ and any non-empty neighborhood $U$ of $\gamma$ there exists $\gamma'\in U$ such that $(\gamma'(t))$ is a Legendrian isotopy and $ \gamma'\sim_\tau \gamma$. 
\end{lem}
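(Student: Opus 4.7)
The plan is to replace $\gamma$ by a piecewise straight-line interpolation in suitable Weinstein charts. For each $s\in[0,1]$ the Weinstein neighborhood theorem (in the Legendrian form used in the paper) provides an open neighborhood $V_s\subset\Leg_k(\Lambda_*)$ of $\gamma(s)$, a convex open neighborhood $W_s\subset C_{k+1}^\infty(\gamma(s),\R)$ of $0$, a homeomorphism $\Psi_s:V_s\to W_s$ with $\Psi_s(\gamma(s))=0$, and a smooth map $\iota_s:W_s\times\gamma(s)\to M$ whose restriction to $\{f\}\times\gamma(s)$ is an embedding parametrizing $\Psi_s^{-1}(f)$. By compactness of $\gamma([0,1])$ and the Lebesgue number lemma, I can pick a partition $0=t_0<t_1<\cdots<t_N=1$ and points $s_1,\dots,s_N\in[0,1]$ so that $\gamma([t_{i-1},t_i])\subset V_{s_i}$ for every $i$; after shrinking the $V_{s_i}$'s if necessary I can also guarantee that any path $\gamma''$ with $\gamma''|_{[t_{i-1},t_i]}\subset V_{s_i}$ for all $i$ lies in the prescribed neighborhood $U$.

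Next I pick smooth functions $\phi_i:[t_{i-1},t_i]\to[0,1]$ with $\phi_i(t_{i-1})=0$, $\phi_i(t_i)=1$, and all derivatives of $\phi_i$ vanishing at both endpoints, and define
\[F_i(t):=(1-\phi_i(t))\,\Psi_{s_i}(\gamma(t_{i-1}))+\phi_i(t)\,\Psi_{s_i}(\gamma(t_i))\in W_{s_i},\]
together with $\gamma'(t):=\Psi_{s_i}^{-1}(F_i(t))$ for $t\in[t_{i-1},t_i]$. The map $\gamma'$ is well-defined at each $t_i$ because $\gamma'(t_i)=\gamma(t_i)$ from either side. The map $(t,x)\mapsto F_i(t)(x)$ is jointly smooth, as it is a smooth straight-line combination of two smooth functions on $\gamma(s_i)$; composing with $\iota_{s_i}$ yields a smooth parametrization of $\gamma'|_{[t_{i-1},t_i]}$ by $\gamma(s_i)$. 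Fixing a diffeomorphism $\rho_1:\Lambda_*\to\gamma(s_1)$ so that $x\mapsto\iota_{s_1}(F_1(0),\rho_1(x))$ is the inclusion $\Lambda_*\hookrightarrow M$, and defining $\rho_{i+1}$ inductively by the condition that the parametrizations $x\mapsto\iota_{s_i}(F_i(t_i),\rho_i(x))$ and $x\mapsto\iota_{s_{i+1}}(F_{i+1}(t_i),\rho_{i+1}(x))$ agree at $t=t_i$, I obtain a global map $[0,1]\times\Lambda_*\to M$; the flat behavior of $\phi_i$ at the partition points (all derivatives vanish) makes this map smooth across each $t_i$. Hence $(\gamma'(t))$ is a Legendrian isotopy.

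It remains to verify $\gamma'\in U$ and $\gamma\sim_\tau\gamma'$. The first is immediate from the choice of partition, since $\gamma'|_{[t_{i-1},t_i]}\subset V_{s_i}$ by convexity of $W_{s_i}$. For the homotopy, on each subinterval the map
\[(t,u)\mapsto(1-u)\Psi_{s_i}(\gamma(t))+uF_i(t)\in W_{s_i},\]
transported back via $\Psi_{s_i}^{-1}$, gives a continuous homotopy from $\gamma|_{[t_{i-1},t_i]}$ to $\gamma'|_{[t_{i-1},t_i]}$ that is stationary at $t=t_{i-1}$ and $t=t_i$; concatenating across $i$ produces a continuous homotopy from $\gamma$ to $\gamma'$ fixing $\{0,1\}$, hence $\gamma\sim_\tau\gamma'$. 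The main technical point is to produce joint smoothness in $(t,x)$ rather than merely $C^{k+1}$-smoothness in the parameter $t$: this is precisely what the straight-line construction $(1-\phi_i(t))g_0(x)+\phi_i(t)g_1(x)$ guarantees for free, which is why I prefer the convex-combination approach to any abstract mollification.
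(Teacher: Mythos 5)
Your proposal is correct and follows essentially the same strategy as the paper's proof: subdivide $[0,1]$ so each piece lies in a convex Weinstein chart, replace each piece by a straight-line segment in that chart reparametrized to be flat at the endpoints, and use convexity both to keep $\gamma'$ inside $U$ and to build the linear homotopy giving $\gamma\sim_\tau\gamma'$. You are somewhat more explicit than the paper about gluing the chart parametrizations into a globally smooth Legendrian isotopy and about verifying the homotopy, but the underlying argument is identical.
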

\begin{proof}
    Let $N\in\N$ be big enough so that there exist Weinstein neighborhoods $\mathcal{U}_i\subset\Leg_k(\Lambda_*)$ of $\gamma(\frac{i}{N})$ for all $i\in[0,N-1]\cap\N$ satisfying:
    
    \begin{enumerate}[1.]
        \item $(\gamma_i(t)):=(\gamma(\frac{i+t}{N}))_{t\in[0,1]}$ is contained in $\mathcal{U}_i$ 
        \item If a path $\eta\in C^0([0,1],\Leg_k(\Lambda_*))$ is such that $(\eta(t))_{t\in[\frac{i}{N},\frac{i+1}{N}]}\subset \mathcal{U}_i$ for all $i\in[0,N-1]\cap\N$ then $\eta\in U$. 
     \end{enumerate}
   Let us denote by $\Phi_i : \mathcal{U}_i\simeq \mathcal{V}\subset  C_{k+1}^\infty(\gamma(\frac{i}{n}),\R)$ the homeomorphism, where $\mathcal{V}$ is a convex $C^{k+1}$-neighborhood of the $0$-function and $\Phi_i(\gamma(\frac{i}{n}))=0$. Consider the map $\gamma_i'\in C^0([0,1],\Leg_k(\Lambda_*))$ defined as $t\mapsto \Phi_i^{-1}(t f)$ where $f:=\Phi_i(\gamma(\frac{i+1}{N}))$. It is easy to check that the concanated path\footnote{concatenation of paths is defined similarly as for Legendrian isotopies \eqref{eq:concatenation}} $\gamma':=\gamma_0'*\cdots *\gamma_{N-1}'$ induces the desired Legendrian isotopy $(\gamma'(t))$. 
\end{proof}

\begin{proof}[Proof of Proposition \ref{prop:universalcover}]
Let $\mathcal{P}(\Leg)$ be the space of all Legendrian isotopies $(\Lambda_t)\subset \Leg(\Lambda_*)$ (not necessarily starting at $\Lambda_*$). The map $\Psi : \mathcal{P}(\Leg)\to C^0([0,1],\Leg_k(\Lambda_*))$ that associates to a Legendrian isotopy $(\Lambda_t)$ the path $t\mapsto \varphi(t,\Lambda_0)$ where $\varphi : [0,1]\times \Lambda_0\to M$ is a smooth parametrization of $(\Lambda_t)$ is well defined. Its restriction to $\mathcal{P}(\Lambda_*)$, the set of Legendrian isotopies starting at $\Lambda_*$, descends to a map $\Phi : \uLeg(\Lambda_*)\to \uLeg_k(\Lambda_*)$ which is surjective thanks to Lemma \ref{lem:universalcover}.  Let us prove that $\Phi$ is also injective. To do so,  consider two Legendrian isotopies $(\Lambda_t),(\Lambda_t')\in\mathcal{P}(\Lambda_*)$ that satisfy $\Psi(\Lambda_t)\sim_\tau \Psi(\Lambda'_t)$. The aim is to show that $(\Lambda_t)\sim(\Lambda_t')$. Let $\gamma:=\Psi(\Lambda_t)$, $\gamma':=\Psi(\Lambda_t')$ and $\Gamma :[0,1]\times [0,1]\to \Leg_k(\Lambda_*)$ a continuous homotopy, relative to endpoints, between $\gamma$ and $\gamma'$. Let $N\in\N$ be big enough so that $\left(\gamma_{i,j}(s,t):=\Gamma(\frac{i+s}{N},\frac{j+t}{N})\right)_{(s,t)\in [0,1]^2}$ is contained in a Weinstein neighborhood $\mathcal{U}_{i,j}$ of $\Gamma(\frac{i}{n},\frac{j}{n})$ for all $i,j\in [0,N-1]\cap \N$.

\textbf{First:} Remark that $(\gamma_{i,j}(0,t))_{t\in [0,1]}$ is contained in $\mathcal{U}_{i-1,j}\cap\mathcal{U}_{i,j}$ for any $i\in[1,N-1]\cap \N$ and $j\in[0,N-1]\cap\N$. By Lemma \ref{lem:universalcover}, there exists a Legendrian isotopy $(\Lambda^{i,j}_t)$ contained in $\mathcal{U}_{i-1,j}\cap\mathcal{U}_{i,j}$ so that $\Psi(\Lambda^{i,j}_t)\sim_\tau\gamma_{i,j}(0,\cdot)$.
The concatenation $(\Lambda_t^i):=(\Lambda_t^{i,0})*\cdots *(\Lambda_t^{i,N})$ is a Legendrian isotopy for all $i$ and $\Psi(\Lambda_t^i)\sim_\tau \Gamma(\frac{i}{n},\cdot)$. 

    \textbf{Second:} Using the transitivity of $\sim$, to conclude, it remains to prove that $(\Lambda_t^{i})\sim (\Lambda_t^{i+1})$ for any $i\in[0,N-1]$, where by convention $(\Lambda_t^0):=(\Lambda_t)$ and $(\Lambda_t^N):=(\Lambda_t')$ are  our inital isotopies.
    By construction, for any $i,j\in[0,N-1]\cap\N$, the Legendrian isotopies $(\Lambda_t^{i,j})$ and $(\Lambda_t^{i+1,j})$ are contained in the Weinstein neighborhood $\mathcal{U}_{i,j}$. Denote by $\Phi_{i,j}$ the homeomorphism between $\mathcal{U}_{i,j}$ and a convex neighborhood $\mathcal{V}$ of the $0$-function in $C^\infty(\Gamma(\frac{i}{N},\frac{j}{N}),\R)$ and consider the two corresponding isotopies $\left(f_t:=\Phi_{i,j}(\Lambda_t^{i,j})\right)$, $\left(g_t:=\Phi_{i,j}(\Lambda_t^{i+1,j})\right)\subset \mathcal{V}$. Finally we define the $[0,1]^2$ family of Legendrians $(\Lambda^{i,j}_{s,t})_{(s,t)\in[0,1]^2}$ to be  $(\Phi_{i,j}^{-1}(s\cdot f_t+(1-s)\cdot g_t))_{(s,t)\in[0,1]^2}$.  It follows that $(\Lambda_{s,t}^i)_{t\in [0,1]}:=(\Lambda_{s,t}^{i,0})_{t\in I}*\cdots*(\Lambda_{s,t}^{i,N-1})_{t\in [0,1]}$, by varying $s$ between $[0,1]$, gives the desired isotopy between $(\Lambda_t^i)$ and $(\Lambda_t^{i+1})$ which concludes the proof. 
\end{proof}

\bibliographystyle{amsplain}
\bibliography{biblio} 

\end{document}